\newtheorem{theorem}{Theorem}[section]
\newtheorem{corollary}[theorem]{Corollary}
\newtheorem{lemma}[theorem]{Lemma}
\newtheorem{proposition}[theorem]{Proposition}
\theoremstyle{definition}
\newtheorem{definition}[theorem]{Definition}
\newtheorem{remark}[theorem]{Remark}
\newtheorem{example}[theorem]{Example} %%%%        Added by the authors          %%%%%%%%%%%%%%%%%
\newcommand{\deled}{\operatorname{Del}_{\epsilon,\delta}}
\DeclareMathOperator{\PSL}{PSL}
\DeclareMathOperator{\Aut}{Aut}
\begin{document}

\title{Chaotic delone sets}

\author[J.A. \'{A}lvarez L\'{o}pez]{Jes\'{u}s A. \'{A}lvarez L\'{o}pez}
\address{Jes\'{u}s A. \'{A}lvarez L\'{o}pez, Departamento e Instituto de Matem\'{a}ticas, Facultade de Matem\'{a}ticas, Universidade de Santiago de Compostela, 15782 Santiago de Compostela, Spain}
\email{jesus.alvarez@usc.es}

\author[R. Barral Lij\'o]{Ram\'on Barral Lij\'o}
\address{Ram\'on Barral Lij\'o, Research Organization of Science and Technology,	Ritsumeikan University, Nojihigashi 1-1-1, Kusatsu, Shiga, 525-8577, Japan}
\email{ramonbarrallijo@gmail.com}

\author[J. Hunton]{John Hunton}
\address{John Hunton, Department of Mathematical Sciences, Durham University, Science Laboratories, South Road, Durham, DH1 3LE, UK}
\email{john.hunton@durham.ac.uk}

\author[H. Nozawa]{Hiraku Nozawa}
\address{Hiraku Nozawa, Department of Mathematical Sciences, Colleges of Science and Engineering, Ritsumeikan University,	Kusatsu-Shiga, Japan}
\email{hnozawa@fc.ritsumei.ac.jp}

\author[J.R. Parker]{John R. Parker}
\address{John R. Parker, Department of Mathematical Sciences, Durham University, Science Laboratories, South Road, Durham, DH1 3LE, UK}
\email{j.r.parker@durham.ac.uk}

\begin{abstract}
	We present a definition of chaotic Delone set, and establish the genericity of chaos in the space of $(\epsilon,\delta)$-Delone sets for $\epsilon\geq \delta$. We also present a hyperbolic analogue of the cut-and-project method that naturally produces examples of chaotic Delone sets.
\end{abstract}

% It is required to enter 2020 MSC.
\subjclass{37D45, 52C23, 37B51}
% Please provide minimum  5 keywords.
\keywords{Delone set, chaos, tiling, foliated space, hyperbolic dynamical system, geodesic flow.}

%The title of your section 1

\maketitle
\section{Introduction}

This paper is concerned with the relation between chaos theory and the dynamics of Delone sets. Introduced by Delone in the context of mathematical crystallography, Delone sets have been studied also from the viewpoints of arithmetics, topology and foliated spaces. Let us recall the definition of a Delone set and some associated constructions; the reader may consult standard references such as \cite{aperiodic-order-1, Math-long-range-aperiodic-order} for further details about these ideas.

\begin{definition}\label{d. delone}
	Let $\epsilon,\delta >0$. A subset $S$ of a metric space $X$ is $(\epsilon,\delta)$-\emph{Delone} if, 
	\begin{enumerate}[(i)]
		\item for every $x\in X$, there is some $y\in S$ with $d(x,y)\leq \epsilon$  ($S$ is  $\epsilon$-\emph{relatively dense}), and
		\item \label{i. separated}  we have $d(x,y)\geq \delta$ for every  $x,y\in S$, $x\neq y$ ($S$ is $\delta$-\emph{separated}).
	\end{enumerate}
\end{definition}

Given $\epsilon,\delta\in \mathbb{R}^+$, let $\deled$ denote the set of $(\epsilon,\delta)$-Delone subsets of $\mathbb{R}^n$. The set $\deled$ has a canonical, compact, metrisable topology (the \emph{local rubber topology}) such that the action of $\mathbb{R}^n$ given by 
\begin{align*}
	\mathbb{R}^n\times \deled &\longrightarrow \deled\\
	\phantom{\{\, s-v\mid s\in S\,\}:=} (v,S) \!\!\quad\quad  &\longmapsto S-v := \{\, s-v\mid s\in S\,\}
\end{align*}
is a continuous action \cite[Lem.\ 2.5]{BaakeMoody}. Definition~\ref{d. delone}(\ref{i. separated}) makes this action locally free, so that the orbits inherit a canonical smooth structure compatible with the topology.

There is a canonical way of obtaining a dynamical system from such a Delone set \cite[p.~10]{BaakeLenz}. Let $S\in \deled$ and write $[S]$ for the orbit $S+\mathbb{R}^n$. Then $\overline{[S]}$, the closure of $[S]$ in the aforementioned topology, is a compact space endowed with an $\mathbb{R}^n$-action. Roughly speaking, it consists of the Delone sets whose bounded subsets have an approximate replica in $S$; when $S$ is repetitive, these are the Delone sets which are locally indistinguishable from $S$, sometimes called the \emph{local isomorphism class} of $S$ \cite{LagariasPleasants}, but in general $\overline{[S]}$ contains more Delone sets than this local isomorphism class. The main class of Delone sets we consider in this paper will not be repetitive. Since $S$ determines $\overline{[S]}$, we may think of dynamical properties of $\overline{[S]}$ as properties of $S$.

Chaos for group actions is usually characterized by three conditions~\cite{Devaney}: \emph{topological transitivity}, \emph{density of periodic orbits}, and \emph{sensitivity to initial conditions}, of which the first one is trivially satisfied in our situation by the presence of a dense orbit. 
In the case of dynamical systems generated by a continuous map on a metric space, sensitivity to initial conditions follows from the topological transitivity and density of periodic orbits \cite{BanksBrooksCairnsDavisStacey1992}. This result was generalized to continuous actions of topological semigroups on uniform spaces \cite{SchneiderKerkhoffBehrischSiegmund2013}, which directly applies to our setting.  Thus we can omit this condition about sensitivity to initial conditions in our definition of chaos, cf.~\cite{Cairn}. Note that, as detailed in the previous paragraph, we will be dealing with continuous group actions on compact spaces, so the definition of periodic orbit used in~\cite{SchneiderKerkhoffBehrischSiegmund2013} becomes simpler: a Delone set $S$ is \emph{periodic} if the orbit $[S]$ is compact; we may also say that the orbit $[S]$ itself is periodic in this case. This is easily seen to be equivalent to the stabilizer being a lattice in $\mathbb{R}^n$.

This discussion leads us to the following definition, analogous to that in \cite{BarralNozawa}.
\begin{definition}\label{d.chaotic}
	A Delone set $S$ is \emph{almost chaotic} if the union of the periodic orbits is dense in $\overline{[S]}$. We say that $S$ is \emph{chaotic} if it is almost chaotic and \emph{aperiodic}; that is, $S-v\neq S$ for all $v\in\mathbb{R}^n \setminus \{0\}$.
\end{definition}

To the authors' knowledge, such Delone sets have not been studied before. However, the analogous definition in the case of shift spaces  is satisfied for well-known objects, such as subshifts of finite type (see~\cite{Kitchens} for the definition and a nice exposition on the subject). 

Also note that, by simple topological arguments, a  repetitive tiling cannot satisfy the obvious analogous condition. In particular, this immediately rules out examples arising from familiar aperiodic constructions such as primitive substitutions and non-singular canonical Euclidean cut-and-project schemes.

If $S$ is almost chaotic, then $\overline{[S]}$ satisfies the aforementioned requirements of topological transitivity and density of periodic orbits. We require aperiodicity in our definition of chaos because almost chaotic Delone sets include the degenerate case where there is a single compact orbit. 

Recall that a property is \emph{topologically generic} if it holds on a \emph{residual subset}---i.e., a subset  containing a countable intersection of open dense sets. This notion is well-behaved for  \emph{Baire spaces}, which in particular include compact, metrisable spaces  by the Baire Category Theorem.  The first main result of the paper establishes the topological genericity of chaos for $(\epsilon,\delta)$-Delone subsets of $\mathbb{R}^n$ when $\epsilon\geq \delta$.

\begin{theorem}\label{t. main generic}
	If $\epsilon \geq \delta$, then being chaotic is a generic property in $\deled$.
\end{theorem}

This result is similar to that obtained for colored graphs in \cite{BarralNozawa}.
The reason why we impose the condition $\epsilon \geq \delta$ is that it is necessary for extension properties (Lemmas~\ref{l.inner} and~\ref{l. gluing}) that are essential ingredients in our proof. It is also easy to come up with examples where $\epsilon < \delta$ and Theorem~\ref{t. main generic} does not hold---e.g., all $(\delta/2,\delta)$-Delone sets in $\mathbb{R}$ are periodic.

The second aim of this paper is to obtain examples of chaotic Delone sets using a so-called cut-and-project construction on the Poincar\'{e} disk. Being discrete subsets of manifolds, Delone sets lie in a sort of middle ground between geometry and discrete mathematics. There are well-known examples of symbolic dynamical systems satisfying the obvious analogue of Definition~\ref{d.chaotic}---e.g., a two-sided version of Champernowne's number \cite{Champernowne}. A less trivial family of examples comes from the symbolic coding of geodesics in hyperbolic surfaces. This research was initiated by Hadamard in \cite{hadamard} and continued by Morse in \cite{morse1,morse2}, among others. In the particular case of the modular surface, there is an approach for symbolic coding of geodesics that is closer to number theory. In \cite{KatokUgarcovici} the reader can enjoy a nice exposition of these methods and their historical development. All of the aforementioned approaches take advantage of the well-known chaotic properties of the geodesic flow  in compact hyperbolic surfaces to construct chaotic symbolic dynamical systems.

Our method, while related to that described in the previous paragraph, is more geometrical in nature, and naturally yields subsets of $\mathbb{R}^n$ instead of a coding of $\mathbb{Z}$. It is also inspired by the projection method in tiling theory, see~\cite{Forrest}. In our case, we will orthogonally project subsets of an orbit of torsion-free uniform lattices $\Gamma$ in the hyperbolic plane $\mathbb{H}^2$ onto a geodesic. This construction is not guaranteed to produce Delone sets in the general case. We prove a necessary and sufficient condition for this to hold, and present a specific example.

Let us fix a torsion-free uniform lattice $\Gamma$ of $\PSL(2;\mathbb{R})$, a positive number $\rho$ and a point $x$ on $\mathbb{H}^{2}$. For a geodesic $\ell$ on $\mathbb{H}^{2}$, let $p_{\ell} : E_{\ell} \to \ell$ be the orthogonal projection from the open tubular neighbourhood of $\ell$ of radius $\rho$, and define
\[
S_{\ell} = p_{\ell}(E_{\ell} \cap \Gamma x)
\]
(see Figure \ref{fig:delone}). 

\begin{figure}[htbp]
	\begin{minipage}{0.65\hsize}
		\centering
		\includegraphics[width=170px, height=170px] {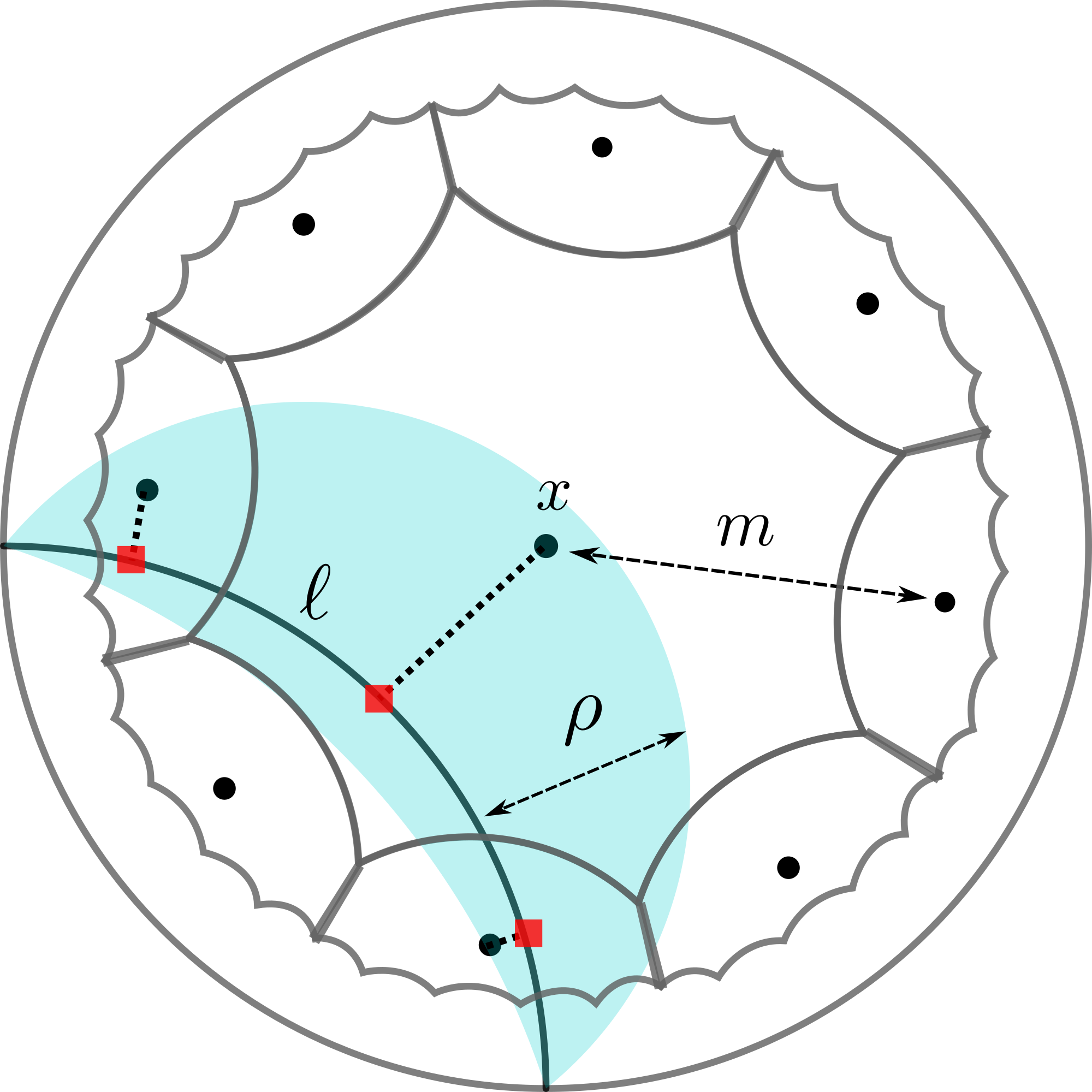}
		\caption{Construction of $S_{\ell}$ in  $\mathbb{H}^2$. The black dots represent points in $\Gamma x$, the blue area is $E_{\ell}$, the red dots represent points in $S_{\ell}$.}
		\label{fig:delone}
	\end{minipage}
	\begin{minipage}{0.30\hsize}
		\centering
		\includegraphics[width=\columnwidth]{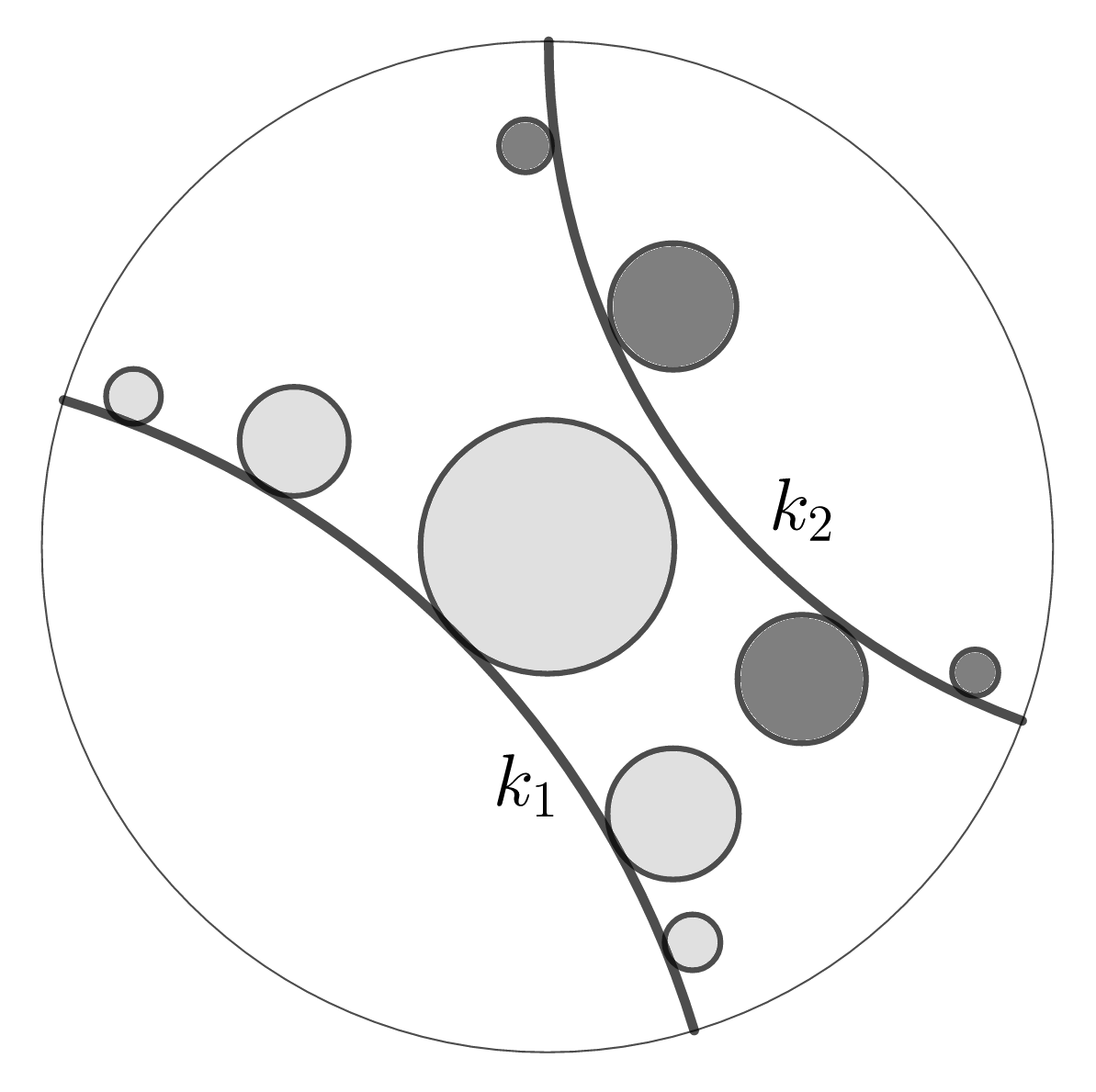}
		\captionsetup{width=0.8\linewidth}
		\caption{The disks represent the inverse image of $\Delta$. The projection of $k_{1}$ to $\Sigma$ has one-sided tangency, while the projection of $k_{2}$ to $\Sigma$ does not.}
		\label{fig:onesided}
	\end{minipage}
\end{figure}

In order to state our result, we need to fix the following terminology: From now on, let $\Sigma = \Gamma\backslash \mathbb{H}^{2}$ be a compact hyperbolic surface. Given a closed disk $D$ on $\Sigma$, a geodesic $\sigma$ on $\Sigma$ is said to have \emph{one-sided tangency with} $\partial D$ 
if $\sigma$ is tangent to $\partial D$ at every point in $\sigma \cap \partial D$, and we can take an orientation of the normal bundle of $\sigma$ so that the outward vector of $\partial D$ at every point of tangency is positive (see Figure~\ref{fig:onesided}). In Section \ref{sec:cp} we prove the following result.

\begin{theorem} 
	\label{thm:cp}
	With the above notation, assume that the orbit of the geodesic flow that consists of the unit tangent vectors of the projection of $\ell$ to $\Sigma$ is dense in $S^{1}(T\Sigma)$, and $d(\ell,y) \neq \rho$ for every $y \in \Gamma x$. Then $S_{\ell}$ is Delone if and only if:
	\begin{enumerate}[(A)]
		\item \label{i:rhoinj} We have $\rho < \operatorname{inj}(\Sigma, x_{0})$. Here $x_{0} = \Gamma x$, and $\operatorname{inj}(\Sigma, x_{0})$ is the injectivity radius of $\Sigma$ at $x_{0}$, which is clearly equal to $\frac{1}{2}\min \{ \, d(y,z) \mid y,z \in \Gamma x,\ y \neq z \, \}$. 
		\item \label{i:geodesicdelta} Any geodesic on $\Sigma$ intersects the closed disk $\Delta$ of radius $\rho$ centred at $x_{0}$, and there exists no geodesic with one-sided tangency with $\partial \Delta$.
	\end{enumerate}
	If $S_{\ell}$ is Delone, then it is chaotic.
\end{theorem}

By Hedlund's theorem (\cite{Hedlund1934}, see also \cite{Hedlund1939} and references therein), the orbits of the geodesic flow that are dense in the unit tangent bundle of $\Sigma$ form a conull set in the space of geodesics. 

It is not easy to check Condition~(\ref{i:geodesicdelta}) in the previous theorem with given $\Gamma$, $\rho$, $x$ and $\ell$, but it is possible for the following example.

\begin{example}
	Let us construct a Riemann surface $\Sigma$ of genus two as follows. Take a hyperbolic $12$-gon $P$ with alternating internal angles $\pi/3$ and $2\pi/3$, all side lengths the same. Identify the sides via the pattern
	\[
	A-B-C-A-D-C-E-D-F-E-B-F
	\]
	going around the boundary (see Figure \ref{fig:P}). There are $3$ orbits of vertices, two made up of three vertices and one made up of $6$. It is easy to see that the quotient has genus $2$ by using the Euler characteristic: $3-6+1=-2$. 
	
	\begin{figure}[htbp]
		\begin{minipage}{0.50\hsize}
			\begin{center}
				\includegraphics[width=\columnwidth]{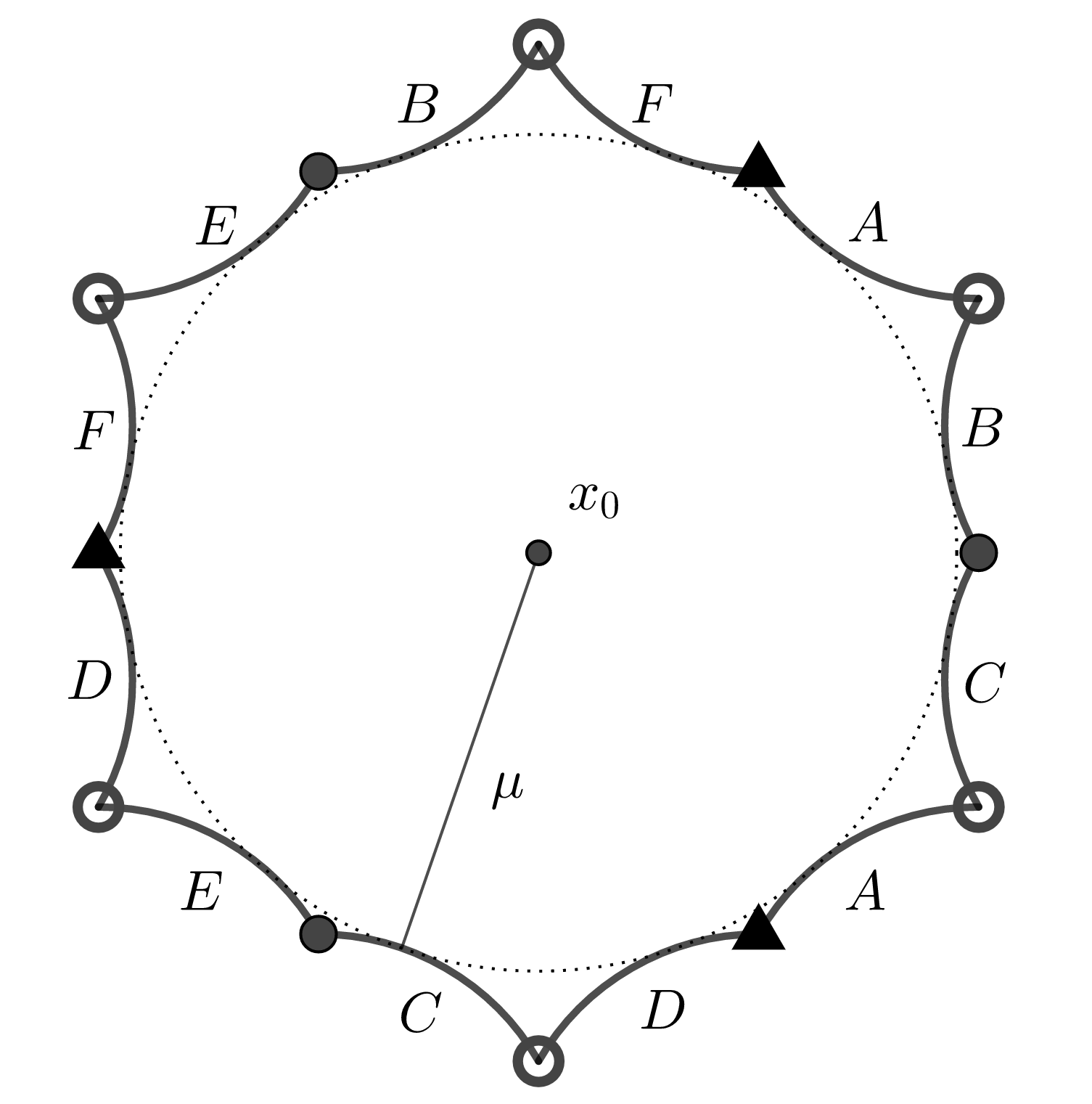}
			\end{center}
			\caption{A $12$-gon $P$}
			\label{fig:P}
		\end{minipage}
		\begin{minipage}{0.45\hsize}
			\begin{center}
				\includegraphics[width=\columnwidth]{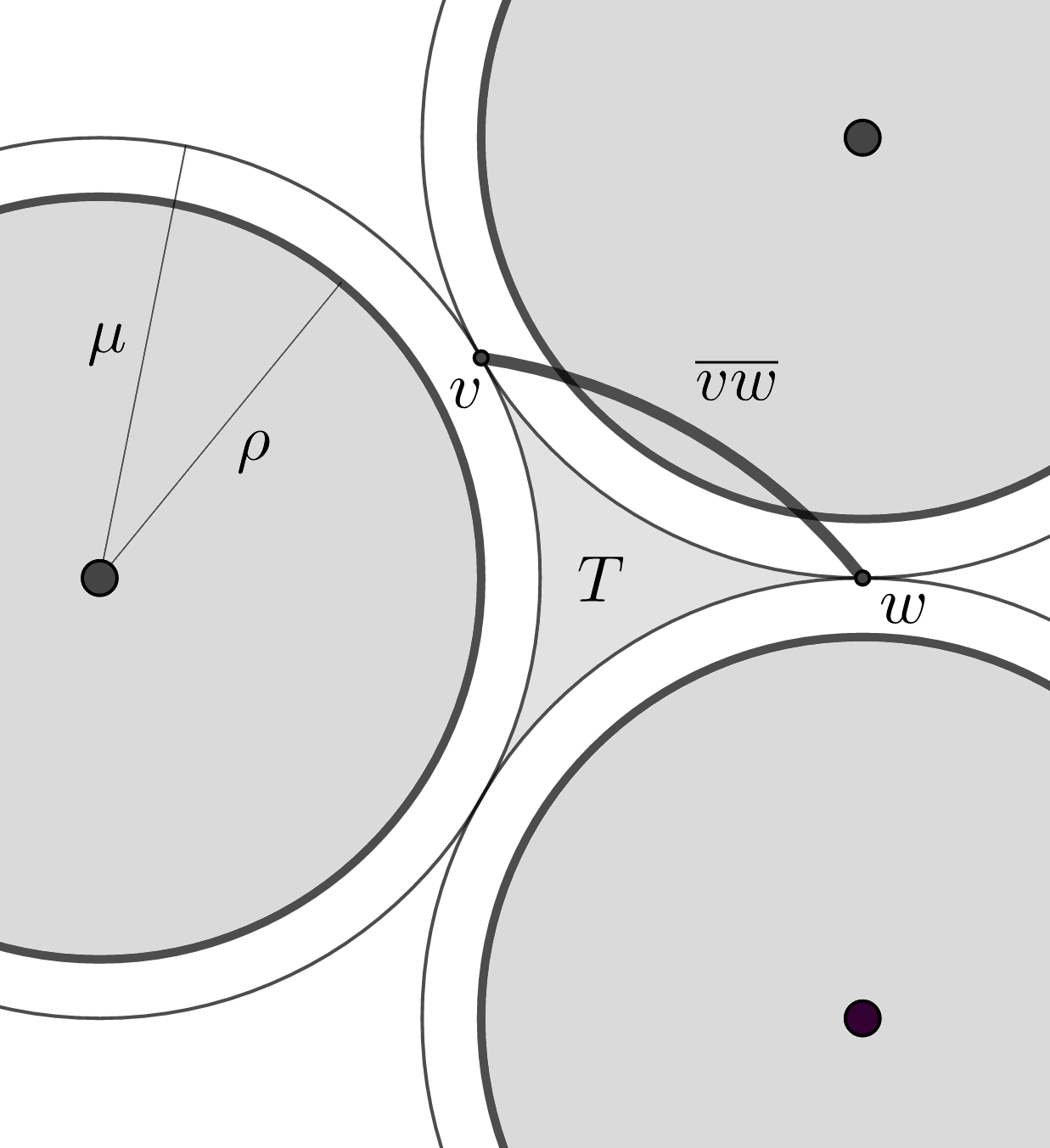}
			\end{center}
			\captionsetup{width=0.8\linewidth}
			\caption{A triangle $T$}
			\label{fig:T}
		\end{minipage}
	\end{figure}
	
	Let $\Gamma < \PSL(2;\mathbb{R})$ be the lattice that corresponds to $\Sigma$. Take $x \in \mathbb{H}^{2}$ so that $x$ is projected to the barycentre $x_{0}$ of $P$. Let $\mu$ denote the injectivity radius of $\Sigma$ at $x_0$. Let $\rho$ be a positive number such that $0 < \mu - \rho \ll 1$. In the sequel we will see that, for any geodesic $\ell$ on $\mathbb{H}^{2}$ that satisfies the assumptions of Theorem~\ref{thm:cp}, the quadruple consisting of $\Gamma$, $x$, $\rho$ and $\ell$ satisfies Conditions~(\ref{i:rhoinj}) and~(\ref{i:geodesicdelta}) in Theorem~\ref{thm:cp}.  Firstly, note that our choice of $\rho$ ensures that  Condition~(\ref{i:rhoinj}) is satisfied. For $r>0$, let $\Delta_{r}$ be the closed disk on $\Sigma$ centred at $x_{0}$ of radius $r$. By the symmetry of the $12$-gon $P$, the disk $\Delta_{\mu}$ is tangent to all edges of $P$. In order to show that Condition~(\ref{i:geodesicdelta}) holds, it is sufficient to show that any geodesic on $\mathbb{H}^{2}$ intersects $\pi^{-1}(\mathring{\Delta}_{\rho})$, where $\pi : \mathbb{H}^{2} \to \Sigma$ is the universal covering projection and $\mathring{\Delta}_{\rho}$ is the interior of $\Delta_{\rho}$. Assume that there exists a geodesic $k$ on $\mathbb{H}^{2}$ contained in $\mathbb{H}^{2} \setminus \pi^{-1}(\mathring{\Delta}_{\rho})$. Here $\pi^{-1}(\partial \Delta_{\mu})$ is a circle packing of $\mathbb{H}^{2}$. Since each angle of $P$ is equal to either of $\pi/6$ or $\pi/3$, we can see that any connected component of $\mathbb{H}^{2} \setminus \pi^{-1}(\Delta_{\mu})$ is either  a triangle or a hexagon. 
	Since each hexagon is adjacent to triangles,
	$k$ intersects a triangle $T$. Since $\rho$ is sufficiently close to $\mu$, the geodesic $k$ should be close to two vertices $v$, $w$ of $T$. Thus $k$ is close to the geodesic segment $\overline{vw}$. 
	Since $\Delta_{\mu}$ is geodesically convex, the segment $\overline{vw}$ is contained in $\pi^{-1}(\Delta_{\mu})$ (see Figure \ref{fig:T}). It follows that $k$ intersects $\pi^{-1}(\mathring{\Delta}_{\rho})$.
\end{example}

It is easy to modify this example to construct an example with $\Sigma$ a closed Riemann surface of arbitrary genus $>1$.

\begin{remark}
	If $\mu \leq \rho$, then $S_{\ell}$ is not $r$-separated for any $r>0$ by the last theorem. But in some cases we can obtain almost chaotic Delone sets in $\mathbb{R}$ or $\mathbb{Z}$ by modifying $S_{\ell}$. We can see that, if $\rho$ is close to $\mu/2$, there cannot be three points in $S_{\ell}$ that are close to each other. Replacing every pair of points which are close to each other with their midpoint, we have a chaotic Delone set in $\ell$.
\end{remark}

Finally, in the last section, we include a short and elementary proof of the fact that, if $S$ is a chaotic Delone set on $\mathbb{R}$, then $S^n$ is a chaotic Delone set on $\mathbb{R}^n$. This shows that we can take products of the above examples to obtain chaotic Delone sets in any dimension.

\section{Preliminaries}\label{sec:prelim}

Let $X$ be a metric space,  let $x\in X$  and $r>0$. We will use $D_X(x,r)$  and $S_X(x,r)$ to denote, respectively,  the \emph{disk} or \emph{closed ball} and the \emph{sphere} of centre $x$ and radius $r$.  We will  omit subscripts when no confusion may arise. 

The canonical topological structure on $\deled$ has received several names, including ``natural topology" \cite{LenzStollman}, ``vague topology" \cite{MullerRichard}, and ``local rubber topology" \cite{BaakeLenz}. Let $\vec{0}\in\mathbb{R}^n$  denote the origin, and let $U$ and $U'$ denote open neighbourhoods of $\vec{0}$,  with $U$  precompact. The local rubber topology mentioned in the introduction  is induced by the entourage base determined by the sets 
\begin{equation}\label{nuu}
	N_{U,U'}:=\{\,(S,S')\in \deled\times \deled \mid S\cap U \subset S'+ U'\text{ and } S'\cap U \subset S+ U' \,\}\;.
\end{equation}
For notational convenience, let
\begin{equation}\label{nr}
	N_r:=N_{B(\vec{0},r),B(\vec{0},1/r)}\qquad  \text{for}\ r>0.
\end{equation} 
For $S\in \deled$, let 
\begin{align*}
	N_{U,U'}(S) & = \{\,S'\in \deled\mid (S,S')\in N_{U,U'}\,\}\;, \\ N_{r}(S) & = \{\,S'\in \deled\mid (S,S')\in N_{r}\,\}\;.  
\end{align*}
For $A,B,C,D$ open neighbourhoods of $\vec{0}$, with $A$ and $B$ relatively compact, one has \cite[p.\ 9]{BaakeLenz}
\begin{equation}\label{eq.composition}
	N_{A+B,B}\circ N_{C+D,D}\subset N_{A\cap C,2(B\cup C)}\;,
\end{equation}
where $2(B\cup C)= (B\cup C)+ (B\cup C)$.
% You may incorporate your references as follows in your main tex file.
% Using BibTex is not recommended but can be handled.

Once we have provided neighbourhood bases for $\deled$, the following lemma follows trivially  from Definition~\ref{d.chaotic}.
\begin{lemma}\label{lem:ac}
	An $(\epsilon,\delta)$-Delone set $S$ is almost chaotic if and only if, for every $r\in \mathbb{N}$, there is a periodic Delone set   $S'\in \deled$   such that  $(S,S')\in N_r$ and, for any $s\in \mathbb{N}$, there is a point $x\in \mathbb{R}^n$ satisfying $(S-x, S')\in N_s$.
\end{lemma}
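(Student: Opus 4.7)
The plan is to translate the two clauses of ``almost chaotic'' into statements about the neighbourhood basis $\{N_r(T)\}_{r\in\NN}$ and exploit two facts: (a) $\overline{[S]}$ is $\Rn$-invariant, so translates of periodic elements of $\overline{[S]}$ are again periodic elements of $\overline{[S]}$; and (b) the condition $S'\in\overline{[S]}$ is equivalent to the orbit $[S]$ meeting every $N_s$-neighbourhood of $S'$.

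The forward implication is essentially a tautology. If $S$ is almost chaotic, then periodic orbits are dense in $\overline{[S]}$, hence near $S$ itself: for each $r\in\NN$ there is a periodic $S'\in\overline{[S]}$ with $(S,S')\in N_r$, and fact (b) delivers the translates $S-x$ approaching $S'$ in every $N_s$.

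For the reverse implication, suppose the stated condition and fix $T\in\overline{[S]}$ and $r\in\NN$; since $N_{r'}\subset N_r$ for $r'\geq r$, I may assume $r\geq 1$. First I would choose $v\in\Rn$ with $(S-v,T)\in N_{2r}$, which exists because $[S]$ is dense in $\overline{[S]}$. Next I would apply the hypothesis with a parameter $r''\in\NN$ satisfying $r''\geq |v|+2r$, producing a periodic $S'\in\overline{[S]}$ with $(S,S')\in N_{r''}$. Since $B(v,2r)\subset B(\vec{0},r'')$ and $1/r''\leq 1/(2r)$, the defining inclusions for $N_{r''}$ translate directly into $(S-v,S'-v)\in N_{2r}$. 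Chaining the two $N_{2r}$-approximations by the triangle inequality (using $r\geq 1$ to ensure $r+1/(2r)\leq 2r$, so that the intermediate point $s-v$ still lies in the relevant ball) yields $(T,S'-v)\in N_r$, and $S'-v$ is the desired periodic element of $\overline{[S]}$ by fact (a).

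The one subtlety worth flagging is that the basis $\{N_r\}$ is not translation-invariant --- its reference ball is anchored at the origin. This is what forces the hypothesis parameter to be inflated from $r$ to $|v|+2r$, so that the agreement between $S$ and $S'$ holds over the ball around $v$ governing the comparison of the shifted sets $S-v$ and $S'-v$. Everything else is bookkeeping in the style of the entourage composition \eqref{eq.composition}.
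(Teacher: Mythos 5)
Your proof is correct. The paper gives no argument at all for this lemma (it is declared to follow ``trivially'' from Definition~\ref{d.chaotic}), and your write-up supplies exactly the details that are being elided: the forward direction is the tautological unwinding, and the reverse direction's translation step --- inflating the hypothesis parameter to $|v|+2r$ to compensate for the origin-anchored entourages $N_r$, so that density of periodic points of $\overline{[S]}$ near $S$ propagates to density near an arbitrary $T\in\overline{[S]}$ --- is the one genuinely non-trivial point, and you handle it correctly.
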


The following lemmas will be used in the next section. The first one follows by applying Zorn's lemma to $\epsilon$-relatively dense sets (see \'Alvarez-Candel {\cite[Proof of Lemma~2.1]{AlvarezCandel2011}}).
\begin{lemma}\label{l.zorn}
	Every $\delta$-separated subset of $\mathbb{R}^n$ is contained in a $(\delta,\delta)$-Delone set.
\end{lemma}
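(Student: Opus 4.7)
The plan is to apply Zorn's lemma exactly as suggested, working with the poset of $\delta$-separated extensions of the given set.

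More precisely, let $A \subset \mathbb{R}^n$ be $\delta$-separated, and consider
\[
\mathcal{P} := \{\, T \subset \mathbb{R}^n \mid T \supseteq A \text{ and } T \text{ is } \delta\text{-separated}\,\},
\]
partially ordered by inclusion. First I would check that every chain $\{T_\lambda\}_{\lambda\in\Lambda}$ in $\mathcal{P}$ has an upper bound, namely $\bigcup_\lambda T_\lambda$. The only thing to verify here is that this union is itself $\delta$-separated: given two distinct points $x,y$ in the union, they both lie in some common $T_\lambda$ (by the chain property), hence $d(x,y) \geq \delta$.

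Zorn's lemma then produces a maximal element $S \in \mathcal{P}$. By construction $S$ is $\delta$-separated and contains $A$, so it remains only to show that $S$ is $\delta$-relatively dense, i.e., that for every $x \in \mathbb{R}^n$ there is some $s \in S$ with $d(x,s) \leq \delta$. I would argue by contradiction: if there were $x \in \mathbb{R}^n$ with $d(x, s) > \delta$ for all $s \in S$, then $S \cup \{x\}$ would still be $\delta$-separated and would strictly contain $S$, violating the maximality of $S$ in $\mathcal{P}$.

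There is no real obstacle here; the argument is a standard maximal-element application of Zorn's lemma, and the only thing to be a little careful about is the (weak vs.\ strict) inequality in Definition~\ref{d. delone}\eqref{i. net}, which is handled automatically since maximality actually gives the stronger conclusion $d(x,S) \leq \delta$ (indeed strictly less than $\delta$ for $x \notin S$).
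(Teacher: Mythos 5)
Your proof is correct and takes essentially the same route as the paper, which simply invokes Zorn's lemma on $\delta$-separated supersets (citing \'Alvarez--Candel) exactly as you do; the chain-union check and the maximality-gives-density step are the whole content. Nothing to add.
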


\begin{lemma}\label{l.inner}
	Let $\epsilon \geq \delta$, let $A\subset \mathbb{R}^n$, and let $S$ be an $(\epsilon,\delta)$-Delone set in $\mathbb{R}^n$. There is an $(\epsilon,\delta)$-Delone set $S'$ on $A$ such that $S$ and $S'$ coincide over the subset 
	\[
	A_\epsilon:=\{\,x\in \mathbb{R}^n\mid D(x,\epsilon)\subset A \,\} \;.
	\]
\end{lemma}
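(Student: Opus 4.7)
The plan is to take $S \cap A$ as a seed and enlarge it inside $A$ by a Zorn's lemma argument in the spirit of Lemma~\ref{l.zorn}, with the constraint that no new point lies in $A_\epsilon$. Concretely, let $S_0 := S \cap A$, which is automatically $\delta$-separated as a subset of the $\delta$-separated set $S$, and consider the poset
\[
\mathcal{P} := \{\, T \subset A \mid T \text{ is } \delta\text{-separated},\ S_0 \subset T,\ T \setminus S_0 \subset A \setminus A_\epsilon \,\}
\]
ordered by inclusion. Every chain in $\mathcal{P}$ has its union as an upper bound (a pairwise $\delta$-separated condition is preserved under nested unions), so Zorn's lemma produces a maximal element $S'$. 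By construction $S'$ is $\delta$-separated and $S' \subset A$, and $S' \cap A_\epsilon = S_0 \cap A_\epsilon = S \cap A_\epsilon$, which gives the desired coincidence on $A_\epsilon$.

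It remains to verify that $S'$ is $\epsilon$-relatively dense in $A$, and this is where the hypothesis $\epsilon \geq \delta$ enters. Fix $x \in A$ and split into two cases. If $x \in A_\epsilon$, then since $S$ is $\epsilon$-relatively dense in $\mathbb{R}^n$ there exists $y \in S$ with $d(x,y) \leq \epsilon$; but $y \in D(x,\epsilon) \subset A$ by definition of $A_\epsilon$, so $y \in S_0 \subset S'$. If instead $x \in A \setminus A_\epsilon$, then maximality of $S'$ in $\mathcal{P}$ forces either $x \in S'$ (trivially within distance $0$ of itself) or the existence of some $y \in S'$ with $d(x,y) < \delta$, since otherwise $S' \cup \{x\}$ would be a strictly larger element of $\mathcal{P}$. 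Using $\epsilon \geq \delta$, this gives $d(x,y) < \delta \leq \epsilon$ in either subcase.

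There is no real obstacle beyond arranging the Zorn step cleanly; the mild subtlety is simply that one cannot run a naive maximal extension without the side condition $T \setminus S_0 \subset A \setminus A_\epsilon$, because otherwise the extension might introduce points of $A_\epsilon$ not belonging to $S$ and spoil the coincidence conclusion. Imposing that condition costs nothing in the density argument precisely because points of $A_\epsilon$ are already $\epsilon$-covered by $S_0$, so the extension only needs to help in the "boundary layer'' $A \setminus A_\epsilon$. Finally, one should note why $\epsilon \geq \delta$ is necessary for the approach: without it, maximal $\delta$-separation in $A \setminus A_\epsilon$ no longer implies $\epsilon$-relative density, and indeed the lemma itself can fail, mirroring the obstruction flagged in the introduction.
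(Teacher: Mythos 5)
Your proof is correct and follows essentially the same route as the paper's: a Zorn's lemma maximalization over $\delta$-separated subsets of $A$ constrained to agree with $S$ on $A_\epsilon$ (your side condition $T\setminus S_0\subset A\setminus A_\epsilon$ together with $S_0\subset T$ is equivalent to the paper's condition $M\cap A_\epsilon=S\cap A_\epsilon$), followed by the same two-case density check using $\epsilon\geq\delta$ in the boundary layer. No gaps.
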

\begin{proof}
	Consider the collection of $\delta$-separated subsets $M$ of $A$ such that $M\cap A_\epsilon=S\cap A_\epsilon$. By Zorn's Lemma, $S\cap A$ is contained in a maximal such subset $S'$. We only need to prove that $S'$ is $\epsilon$-relatively dense in $A$, so let $x\in A$ and let us prove $d(x,S')\leq \epsilon$. If $x\in A_\epsilon$, the assumption that $S$ is a Delone set in $\mathbb{R}^n$ means that there is some $s\in S$ with $d(x,s)\leq \epsilon$. But  $s\in A$ by the triangle inequality and $S\cap A\subset S'$, so $s\in S'$ and $d(x,S')\leq \epsilon$. Consider now the case where $x\in A\setminus A_\epsilon$, and suppose by absurdity that $d(x,S')> \epsilon \geq \delta$. Then $S'\cup \{x\}$ is a $\delta$-separated subset of $M$ strictly containing $S'$ and satisfying $(S'\cup \{x\}) \cap A_\epsilon=S\cap A_\epsilon$. This contradicts the maximality of $S'$, so $d(x,S')\leq \epsilon$.
\end{proof}

\begin{lemma}\label{l. gluing}
	Suppose $\epsilon\geq \delta$, and let $A$ be a subset of either $\mathbb{R}^n$ or $\mathbb{T}^n$. Then, for any $(\epsilon,\delta)$-Delone set $N$ in  $A$, there is an $(\epsilon,\delta)$-Delone set $S$ in $\mathbb{R}^n$ or $\mathbb{T}^n$ such that $S\cap A=N$.
\end{lemma}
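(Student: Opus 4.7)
The plan is to extend $N$ to the ambient space (call it $X$, where $X=\Rn$ or $X=\Tn$) by adding points only in the complement $X\setminus A$, via a direct Zorn's lemma argument in the spirit of Lemma~\ref{l.zorn}. Specifically, I would consider the collection $\mathcal{T}$ of subsets $T\subset X\setminus A$ such that $N\cup T$ is $\delta$-separated, ordered by inclusion. This collection is nonempty (it contains $\emptyset$), and every chain has an upper bound given by union, so Zorn's lemma provides a maximal element $T_0$.

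Set $S=N\cup T_0$. The identity $S\cap A=N$ is immediate from $T_0\cap A=\emptyset$ and $N\subset A$, and $\delta$-separation of $S$ is built into the definition of $\mathcal{T}$. It remains to verify that $S$ is $\epsilon$-relatively dense in $X$. For $x\in A$, this is inherited from $N$ being $\epsilon$-relatively dense in $A$, since $N\subset S$. For $x\in X\setminus A$, suppose by absurdity that $d(x,S)>\epsilon$; then $T_0\cup\{x\}$ still lies in $\mathcal{T}$ and properly contains $T_0$, contradicting its maximality.

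The delicate step, and the one where the hypothesis $\epsilon\geq\delta$ genuinely enters, is this last implication: to conclude that $T_0\cup\{x\}$ is admissible I need $d(x,N\cup T_0)>\delta$, which follows from $d(x,S)>\epsilon\geq\delta$. Without this hypothesis the Zorn argument would only furnish $\delta$-relative density at points outside $A$, which is strictly weaker than what is required. Since nothing in the proof uses anything beyond the metric structure, the same argument applies uniformly to $X=\Rn$ and $X=\Tn$, so no separate treatment of the torus case is needed. I do not anticipate any substantive obstacle beyond the correct use of $\epsilon\geq\delta$.
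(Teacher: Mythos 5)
Your proposal is correct and is essentially identical to the paper's own proof: the same Zorn's lemma argument on subsets of the complement of $A$ whose union with $N$ is $\delta$-separated, with maximality forcing $\epsilon$-relative density outside $A$ via $\epsilon\geq\delta$. No issues.
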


\begin{proof}
	We will write the proof for $A\subset\mathbb{R}^n$, the case where $A\subset \mathbb{T}^n$ being identical.
	Consider the collection of subsets $M\subset\mathbb{R}^n \setminus A$ such that $N\cup M$ is $\delta$-separated. By Zorn's Lemma, there is such a subset $L$ that is maximal by inclusion. Then $S:= N \cup L$ trivially satisfies $S\cap A=N$ and is $\delta$-separated by the definition of $N$. Let us prove that it is also  a $\epsilon$-relatively dense, so let $x\in \mathbb{R}^n$. If $x\in A$, then by hypothesis $d(x,N)\leq \epsilon$. If $x\notin A$ and $d(x, S)>\epsilon\geq \delta$, then $S\cup \{x\}$ is $\delta$-separated, contradicting the maximality of $L$.
\end{proof}

\section{Genericity of chaotic Delone sets}

This section contains the proof of Theorem~\ref{t. main generic}. 
We start by proving that aperiodicity is a generic property. 
Let $0<\alpha<\delta/4$ and,  for $q\in \mathbb{Q}^n$, let
\begin{equation}\label{vq}
	V_q= \{\,  S\in \deled \mid \exists x\in S, \, D(x-q,\alpha)\cap S=\emptyset   \, \} \;.
\end{equation}
Intuitively, $V_q$ contains all Delone sets $S$ containing a point $s$ such that $S$ fails to have period $q$ at $s$ with respect to some error parameter $\alpha>0$. We now show that the sets $V_q$ are open and dense and $\bigcap_{q\in\mathbb{Q}^n} V_q$ consists of aperiodic Delone sets.

\begin{proposition}\label{p. Vq open}
	The subsets $V_q\subset \deled$ are open for $q\in \mathbb{Q}^n$.
\end{proposition} 

\begin{proof}
	Let $S\in V_q$, so that there is some $x\in S$ such that $d(x-q,S)=\beta > \alpha$. Let $r\in\mathbb{N}$  be large enough depending on $x$, $q$, $\alpha$, and $\beta$,  and let $S'\in N_r(S)$.  If $r>|x|$, then  the  definition of $N_r(S)$ ensures that there is some $y\in S'$ with $d(x,y)<1/r$. Suppose that there exists some  $z\in B(y-q,\alpha)\cap S'$.  If 
	\[
	r-1/r>|x|+|q|+\alpha\;,
	\]
	then $z\in B(0,r)$. 
	Therefore, by the definition of $N_r(S)$, there is some $z'\in S$ with $d(z,z')<1/r$. We may assume that $\alpha + 2/r<\beta$. Then the triangle inequality yields $d(x-q,z')<\beta$, a contradiction. Therefore $S'\in V_q$ and, since $S'$ was an arbitrary element of $N_r(S)$, we get $N_r(S)\subset V_q$. 
\end{proof}

\begin{proposition}\label{p.vqdense}
	The sets $V_q$ are dense in $\deled$ for $q\in \mathbb{Q}^n$.
\end{proposition}
\begin{proof}
	Let us start by proving that there is some $S\in V_q$ satisfying the condition in~(\ref{vq}) with $x=\vec{0}\in \mathbb{R}^n$. Assume first that $q$ has all coordinates equal to $0$ except the first one. If $|q|+\alpha<\delta $, then any $S\in\deled$ with $\vec{0}\in S$ satisfies the condition in~(\ref{vq}) with $x=\vec{0}$ because it is $\delta$-separated, so assume that $|q|+\alpha\geq \delta $. Let $y=q+(2\alpha,0,\ldots,0)$, and let $S$ be a $(\delta,\delta)$-Delone set containing $\vec{0}$ and $y$, which exists by Lemma~\ref{l.zorn}. Since \[D(q,\alpha)\subset D(y,3\alpha)\subset D(y,\delta)\] by the triangle inequality, we get  that $S$ satisfies~(\ref{vq}) with $x=\vec{0}$. The same strategy applies for general $q\in \mathbb{Q}^n$ after applying a suitable rotation.

	Let us prove that $V_q$ is dense, so let $S'\in \deled$. By Lemma~\ref{l. gluing}, for $r,s\in \mathbb{N}$  and $y$ far enough from $\vec{0}$, there is an $(\epsilon,\delta)$-Delone set $S''$ such that 
	\[
	S'\cap B(\vec{0},r)= S''\cap B(\vec{0},r)
	\] 
	and 
	\[
	y+(S\cap B(\vec{0},s))=S''\cap B(y,s)\;,
	\] 
	where $S$ is the Delone set constructed in the previous paragraph.
	It is clear that, for $s>\delta + \alpha$,  $S''$ satisfies the condition in~(\ref{vq}) with $x=y$. Therefore, given an arbitrary $S'\in \deled$  and $r>0$, we have produced a Delone set $S''\in V_q$ such that $S''\in N_r(S')$, and the proposition follows.
\end{proof}

\begin{proposition}\label{p.vqap}
	The set	$\bigcap\nolimits_{q\in \mathbb{Q}^n} V_q$ consists of aperiodic Delone sets. 
\end{proposition}
\begin{proof} 
	Suppose on the contrary that there are $S\in \bigcap_{q\in \mathbb{Q}^n} V_q$ and $v\in \mathbb{R}^n\setminus\{0\}$ such that $S-v=S$. In particular, this implies that, for every $ q\in \mathbb{Q}^n$ and $z\in S$, $d(z-q,S)\leq |v-q|$. When $|q-v|<\alpha$, we obtain a contradiction with the definition of $V_q$ in~(\ref{vq}).
\end{proof}

\begin{corollary}\label{c.ap}
	Aperiodicity is a generic property in $\deled$ for $\epsilon\geq \delta$.
\end{corollary}
\begin{proof}
	By Propositions~\ref{p. Vq open},~\ref{p.vqdense}, and~\ref{p.vqap}, $\bigcap_q V_q$ is a residual subset consisting of aperiodic Delone sets.
\end{proof}

In order to complete the proof of Theorem~\ref{t. main generic}, we will now show that being almost chaotic is also a generic property.
Let $v_i$, $i=1,\ldots, n$, denote the standard basis of $\mathbb{R}^n$.

\begin{definition}\label{d.w}
	For $m,m'\in \mathbb{N}$, let $W_{m,m'}\subset  \deled$ be the subset of $(\epsilon,\delta)$-Delone sets satisfying the following conditions:
	\begin{enumerate}[(i)]
		\item \label{i.m} there is some $x \in \mathbb{R}^n$ such that	$(S,S-x)\in N_m$, and
		\item \label{i.mprime}for any integer coefficients $a_1,\ldots, a_n$ with $|a_i|\leq m'$ for $i=1,\ldots,n$, we have
		\[
		\Big(S-x,S-x-(m+\delta+\epsilon)\sum\limits_{i=1,\ldots,n} a_iv_i\Big)\in N_{m'}\;.
		\]
	\end{enumerate}
\end{definition}
The intuitive idea behind the definition of $W_{m,m'}$ is as follows: a Delone set $S$ belongs to $W_{m,m'}$ if there is some $x$ such that $S$ is similar to $S-x$ with respect to the parameter $m$, and $S-x$ is close to being a periodic Delone set, where $m'$ measures how close to being periodic $S-x$ is. We will see that $W_{m,m'}$ are open dense sets, and $\bigcap_{m,m'\in\mathbb{N}} W_{m,m'}$ consists of almost periodic Delone sets. 

\begin{proposition}\label{p.wopen}
	The sets $W_{m,m'}$ are open for $m,m'\in \mathbb{N}$.
\end{proposition}

\begin{proof}
	Let $S\in W_{m,m'}$. We will show that there is some $l\in \mathbb{N}$ such that $N_l(S)\subset W_{m,m'}$. By the definition of $W_{m,m'}$, there is some $x\in \mathbb{R}^n$ satisfying Definition~\ref{d.w}(\ref{i.m})--(\ref{i.mprime}). Since the sets $N_r$ are open for $r>0$ and any Delone set in $\mathbb{R}^n$ is locally finite, there are $m>\tilde m>0$ and $\tilde{m}'>m'>0$  such that
	\[
	(S,S-x)\in N_{\tilde m}, \quad \Big(S-x,S-x-(m+\epsilon+\delta)\sum\limits_{i=1,\ldots,n} a_iv_i\Big)\in N_{\tilde {m}'}
	\]
	for $|a_i|\leq m'$, $i=1,\ldots,n$.
	By~(\ref{eq.composition}), we can choose $l$ large enough so that $N_l\circ N_{\tilde m}\circ N_l \subset N_{m}$ and $N_l\circ N_{\tilde{m}'}\circ N_l \subset N_{m'}$. It is now a trivial matter to check that every $S'\in N_l(S)$ satisfies Definition~\ref{d.w}.
\end{proof}

\begin{proposition}\label{p.wdense}
	If $\epsilon \geq \delta$, then the subsets  $W_{m,m'}$ are dense in $\deled$ for $m,m'\in \mathbb{N}$. 
\end{proposition}
\begin{proof}
	Let $S\in \deled$ and $l\in \mathbb{N}$. Identify the $n$-torus $\mathbb{T}^n$ with the quotient of the square $[-m-\delta-\epsilon, m+\delta+\epsilon]^n$ that identifies opposite faces. Let $\pi\colon \mathbb{R}^n \to \mathbb{T}^n$ denote the quotient map. 
	By Lemma~\ref{l.inner} there is a $(\epsilon,\delta)$-Delone set $S'$ on $[-m-\epsilon,m+\epsilon]$ satisfying
	\[
	S'\cap [-m,m]^n = S\cap [-m,m]^n\;.
	\]
	Then $\pi(S'\cap [-m-\epsilon,m+\epsilon]^n)$ is a $\delta$-separated subset and  $\epsilon$-relatively dense in $\pi ([-m-\epsilon,m+\epsilon]^n)$, so applying Lemma~\ref{l. gluing} we may enlarge it to an $(\epsilon,\delta)$-Delone set $T$ on $\mathbb{T}^n$ that satisfies
	\[
	\pi(S\cap [-m,m]^n)= T\cap \pi([-m,m]^n)\;.
	\]
	Choose $x\in \mathbb{R}^n$ sufficiently far from $0$, and lift $T\subset \mathbb{T}^n$ to an $(\epsilon,\delta)$-Delone set $\widehat T$ on a ``grid" of fundamental domains given by the squares with centres $x+\sum a_iv_i$ and length $2(m+\delta+\epsilon)$, as illustrated in Figure~\ref{f.lift}.
	Using Lemma~\ref{l. gluing}, complete the disjoint union 
	\[
	\widehat T\sqcup (S\cap [-l,l]^n) 
	\]
	to an $(\epsilon,\delta)$-Delone set $\widehat S$ satisfying
	\[
	\widehat S \cap [-l,l]^n = S\cap [-l,l] ^n 
	\]
	and
	\begin{multline*}
		\widehat S \cap [x-m'(m+\delta+\epsilon),x+m'(m+\delta+\epsilon)]^n   \\
		\quad \quad=\widehat T \cap [x-m'(m+\delta+\epsilon),x+m'(m+\delta+\epsilon)]^n\;.
	\end{multline*}
	Then $\widehat S$ satisfies the conditions of Definition~\ref{d.w} with $x\in \mathbb{R}^n$. We have shown that, for every $S\in \deled$ and $l\in \mathbb{N}$, there is $\widehat S\in W_{m,m'}\cap N_l(S)$. This establishes the density of $W_{m,m'}$.
\end{proof}

\begin{figure}[tb]
	\includegraphics[scale=0.6]{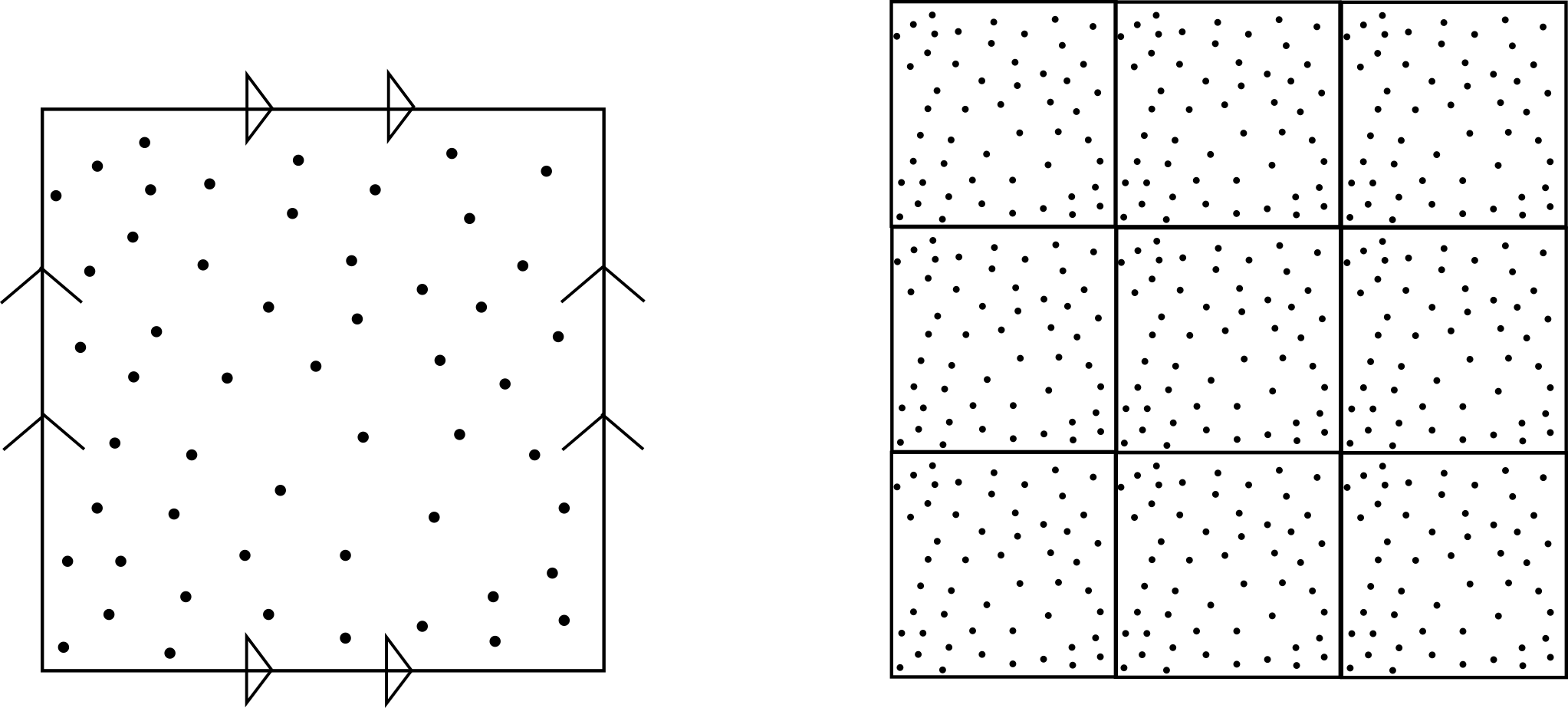}
	\caption{The picture on the left represents $T\subset \mathbb{T}^n$; the right one its lift to $\mathbb{R}^n$ following a grid pattern.}
	\label{f.lift}
\end{figure}

\begin{lemma}\label{l.wac}
	The set $\bigcap_{m,m'} W_{m,m'}$ consists of almost chaotic Delone sets.
\end{lemma}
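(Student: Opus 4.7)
My plan is to invoke Lemma~\ref{lem:ac}: given $S \in \bigcap_{m, m'} W_{m, m'}$ and an arbitrary $r \in \NN$, I must exhibit a periodic $S' \in \deled$ with $(S, S') \in N_r$ such that, for every $s \in \NN$, some translate $S - x$ satisfies $(S - x, S') \in N_s$.

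First I would fix $m \in \NN$ slightly larger than $r$ and let $m'$ range over $\NN$. For each $m'$, the hypothesis $S \in W_{m, m'}$ produces some $x_{m'} \in \Rn$ satisfying Definition~\ref{d.w}: the translate $S - x_{m'}$ is $N_m$-close to $S$, and behaves like a Delone set with period lattice $(m + \delta + \epsilon)\ZZ^n$ in the $N_{m'}$ sense under all integer shifts of norm up to $m'$. By the compactness of $\deled$ in the local rubber topology, I extract a subsequence $m'_k \to \infty$ with $S - x_{m'_k} \to S'$ for some $S' \in \deled$.

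Next I would verify that $S'$ has the required properties. For periodicity, fix $a \in \ZZ^n$ and set $v = (m + \delta + \epsilon) \sum_i a_i v_i$. For every $k$ large enough that $m'_k \geq \max_i |a_i|$, Definition~\ref{d.w}(\ref{i.mprime}) gives $(S - x_{m'_k},\, S - x_{m'_k} - v) \in N_{m'_k}$. Since the translation action is continuous, as $k \to \infty$ the left entry converges to $S'$ and the right entry to $S' - v$; since the nested family $\{N_r\}_{r \in \NN}$ generates the Hausdorff uniform structure on $\deled$, the intersection of their closures reduces to the diagonal and I conclude $S' = S' - v$, so that $S'$ is $(m + \delta + \epsilon)\ZZ^n$-periodic. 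For $N_r$-proximity, passing to the limit in $(S, S - x_{m'_k}) \in N_m$ and using that $m > r$ yields $(S, S') \in N_r$. For the orbit-approximation clause, given any $s \in \NN$, for $k$ large enough one has $(S - x_{m'_k}, S') \in N_s$ by the very definition of convergence.

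The step I expect to be the main obstacle is the passage to the limit inside the entourages: the relations $N_m$ are defined via inclusions involving open sets and are not literally closed in $\deled \times \deled$. I plan to handle this by the modest enlargement of $m$ above $r$ together with the composition rule~\eqref{eq.composition}, which guarantees that limits of $N_m$-close pairs still land in the coarser $N_r$. With that technicality handled, the rest is a standard compactness and diagonalization argument combined with the approximate-periodicity information encoded in Definition~\ref{d.w}(\ref{i.mprime}).
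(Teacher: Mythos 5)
Your proposal is correct and follows essentially the same route as the paper's proof: fix $m>r$, use Definition~\ref{d.w} to obtain the points $x_{m'}$, extract a convergent subsequence $S-x_{m'_k}\to S'$ by compactness of $\deled$, deduce $(S,S')\in N_r$ from $m>r$, and pass to the limit in the approximate-periodicity relations to conclude that $S'$ is $(m+\delta+\epsilon)\ZZ^n$-periodic. Your explicit treatment of the closure issue for the entourages $N_m$ is a welcome refinement of a point the paper glosses over, but the argument is the same.
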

\begin{proof}
	Let $S\in \bigcap_{m,m'} W_{m,m'}$ and fix a neighbourhood $N_l(S)$ ($l\in \mathbb{N}$).  Let $m>l$. For every $m'$ there is a point $x_{m'}\in \mathbb{R}^n$ such that $(S,S-x_{m})\in N_{m}$ and, for any integer coefficients $a_1,\ldots, a_n$ with $|a_i|\leq m'$, we have 
	\[
	\Big(S-x_{m'},S-x_{m'}-(m+\delta+\epsilon)\sum \limits_{i=1,\ldots,n} a_iv_i\Big)\in N_{m'}\;.
	\]
	Since $\deled$ is compact, the sequence $(S-x_{m'})_{{m'}\in \mathbb{N}}$ has a subsequence converging to some $S'\in \overline{[S]}$, and  $(S,S')\in U_{l}$ because $l<m$. Moreover, for $m'$ large enough and $|a_i|\leq m'$, we have 
	\[
	\Big(S-x_{m'}, S-x_{m'}-(m+\delta+\epsilon)\sum\limits_{i=1,\ldots,n} a_ie_i\Big)\in N_{m'}\;.
	\] 
	By continuity we obtain
	\[
	\Big(S',S'- (m+\delta+\epsilon)\sum\limits_{i=1,\ldots,n} a_ie_i\Big)\in N_{m'}
	\]
	for every $m'\in \mathbb{N}$. This means $(m+\delta+\epsilon)\bigoplus_i a_i\mathbb{Z}^n\subset \Aut(S')$, hence $S'$ is periodic.  We have proved that, for any $S\in \bigcap\nolimits_{m,m'} W_{m,m'}$, there are periodic Delone sets in $\overline{[S]}$ arbitrarily close to $S$, and the result follows.
\end{proof}

\begin{corollary}\label{c.ac}
	Being almost chaotic is a generic property in $\deled$ for $\epsilon\geq\delta$.
\end{corollary}
\begin{proof}
	The set $\bigcap_{m,m'} W_{m,m'}$ is a residual subset consisting of almost chaotic Delone sets by Propositions~\ref{p.wopen} and~\ref{p.wdense} and Lemma~\ref{l.wac}.
\end{proof}

The combination of Corollaries~\ref{c.ap} and~\ref{c.ac} gives Theorem~\ref{t. main generic}.

\section{Cut-and-project construction on the Poincar\'{e} disk}\label{sec:cp}

In this section we will present a geometric example of a chaotic Delone set on $\mathbb{R}$ by proving Theorem~\ref{thm:cp}. 

As we will see in the course of the proof of Theorem~\ref{thm:cp}, it turns out that it is 
more natural to consider a variant of the hyperbolic cut-and-project set $S_{\ell}$ in Theorem~\ref{thm:cp}. Let us fix some notation first: Fix a torsion-free uniform lattice $\Gamma$ of $\PSL(2;\mathbb{R})$, a positive number $\rho$ and a point $x$ in $\mathbb{H}^{2}$ throughout this section. Let $\Sigma = \Gamma\backslash \mathbb{H}^{2}$ be the compact hyperbolic surface obtained from $\Gamma$. From now on, all geodesics on $\mathbb{H}^{2}$ and $\Sigma$ are assumed to be parametrised by arc-length. The image of a geodesic $k : \mathbb{R} \to \mathbb{H}^{2}$ is denoted by the same symbol $k$, and it is identified with $\mathbb{R}$  via the arc-length parametrisation. Thus subsets of the image of geodesics on $\mathbb{H}^{2}$ are regarded as subsets of $\mathbb{R}$. We orient the normal bundle of $k$ with the orientation induced from the standard orientation of $\mathbb{H}^{2}$ and the orientation of $k$. We will consider the following variant  of $S_{\ell}$ in Theorem~\ref{thm:cp}.

\begin{definition}\label{def:Splus}
	Let $k$ be a geodesic on $\mathbb{H}^{2}$. Let $E_{k}$ be the open tubular neighbourhood of $k$ of radius $\rho$ in $\mathbb{H}^{2}$. Let $\partial^{+}{E}_{k}$ be the connected component of the boundary of $E_{k}$ that is positive  with respect to the orientation of the normal bundle of $k$. Let 
	\[
	\overline{E}^{+}_{k} = E_{k} \cup \partial^{+}{E}_{k}\;, \quad S^{+}_{k} = p_{k}(\overline{E}^{+}_{k}\cap \Gamma x),
	\]
	where $p_{k} : \mathbb{H}^{2} \to k$ is the orthogonal projection.
\end{definition}

We fix throughout this section a geodesic $\ell$ on $\mathbb{H}^{2}$ such that the orbit of the geodesic flow consisting of the unit tangent vectors of the projection of $\ell$ is dense in the unit tangent bundle of $\Sigma$. 
As we will see, $S_{\ell}^{+}$ always has a chaotic nature. However, it may not be Delone in general. We will show the following generalization of Theorem~\ref{thm:cp} to $S^{+}_{\ell}$, which characterises when it holds. 

\begin{theorem} 
	\label{thm:cp2}
	With the above notation, $S^{+}_{\ell}$ is Delone if and only if:
	\begin{enumerate}[(A)]
		\item $\rho < \operatorname{inj}(\Sigma, x_{0})$, where $x_{0} = \Gamma x$ and $\operatorname{inj}(\Sigma, x_{0})$ is the injective radius of $\Sigma$ at $x_{0}$.
		\item Any geodesic on $\Sigma$ intersects the closed disk $\Delta$ of radius $\rho$ centred at $x_{0}$, and there exists no geodesic with one-sided tangency with $\partial \Delta$.
	\end{enumerate}
	If $S^{+}_{\ell}$ is Delone, then it is chaotic.
\end{theorem}

This result is slightly more general than Theorem~\ref{thm:cp}. Indeed, in Theorem~\ref{thm:cp} we assume that $d(\ell,y) \neq \rho$ for any $y \in \Gamma x$ which implies that $S_{\ell}^{+} = S_{\ell}$.

First we show the chaotic nature of $S^{+}_{\ell}$. In order to do so, we will use a classical result of Anosov on the 
chaotic nature of the geodesic flow on $\Sigma$.

\begin{theorem}[{\cite{AnosovRussian}, for English translation, see \cite{Anosov}}]\label{thm:Anosov}
	The union of closed orbits is dense in the unit tangent bundle of $\Sigma$.
\end{theorem}

We will say that a geodesic $k$ on $\mathbb{H}^{2}$ is $\Sigma$-\emph{closed} if $k$ is projected on a closed geodesic on $\Sigma$. For a $\Sigma$-closed geodesic $k$, it is easy to see the sets $S_{k}$ and $S^{+}_{k}$ associated with $k$ are periodic. We will prove that $S^{+}_{\ell}$ is almost chaotic by approximating $S^{+}_{\ell}$ with such periodic $S_{k}$ or $S_{k}^{+}$ based on the characterisation of the almost chaotic property in Lemma \ref{lem:ac}. However, if there are $y \in \Gamma x$ such that $d(k,y) = \rho$, it may violate the approximation of $S^{+}_{\ell}$ by $S_{k}$ with $\Sigma$-closed geodesics $k$. As we will see, the set $S^{+}_{k}$ behaves better than $S_{k}$ in this approximation (see Remark \ref{rem:diff}).

In the following lemma we will use $N_{r}$ $(r>0)$ in a situation more general than in Section \ref{sec:prelim}: let $N_{r}$ be the set consisting of all pairs $(T,T')$ of subsets of $\mathbb{R}$ such that 
\[T \cap [-r,r] \subset T' + [-1/r,1/r]\;, \quad T' \cap [-r,r] \subset T + [-1/r,1/r].\]
Now we will show the following, which implies the chaotic nature of $S^{+}_{\ell}$.

\begin{lemma}\label{lem:cha}
	\begin{enumerate}[(i)]
		\item \label{enu:Nr}For any $r > 0$, there exists a $\Sigma$-closed geodesic $k$ such that $(S^{+}_{\ell}, S^{+}_{k})\in N_{r}$.
		\item \label{enu:Ns} For any $s > 0$ and any geodesic $k$ on $\mathbb{H}^{2}$, there exists $a \in \mathbb{R}$ such that $(S^{+}_{\ell}-a, S^{+}_{k})\in N_{s}$.
	\end{enumerate}
\end{lemma}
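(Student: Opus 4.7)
The strategy is a single principle applied in two ways: if a geodesic $\ell'$ on $\HH^{2}$ (chosen with a suitable arc-length parameterization) has its initial point and unit tangent vector close to those of $k$, then by continuous dependence of geodesics on initial conditions, $\ell'$ and $k$ stay uniformly close on any prescribed window $[-R,R]$. Combined with local finiteness of $\Gamma x$, this will yield $(S^{+}_{\ell'}, S^{+}_{k}) \in N_{r}$ for $R$ large enough and the initial closeness small enough. Thus both parts reduce to manufacturing a geodesic whose initial conditions $C^{0}$-approximate those of a prescribed target.

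For part~(i), the required $k$ is supplied by Theorem~\ref{thm:Anosov}: $\Sigma$-closed geodesics are dense in $S^{1}(T\Sigma)$, so there is a $\Sigma$-closed $\gamma$ whose tangent at some point lies within any prescribed $\eta>0$ of the projection of the initial tangent of $\ell$. Lifting $\gamma$ to a geodesic $k$ on $\HH^{2}$ whose initial conditions are within $\eta$ of those of $\ell$, and choosing $\eta$ exponentially small in $R$ (with rate governed by the curvature of $\HH^{2}$), makes $\ell$ and $k$ as close as one likes on $[-R,R]$, and hence $S^{+}_{\ell}$ and $S^{+}_{k}$ as close as one likes in $N_{r}$.

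For part~(ii), the given geodesic $k$ is the target; the analogous ingredient is now the standing density of the $\ell$-orbit in $S^{1}(T\Sigma)$. For any $\eta, R>0$ we can therefore find $a\in\RR$ such that the unit tangent of $\ell$ at time $a$ projects within $\eta$ of a chosen tangent vector of $k$. The reparameterized geodesic $\ell(\,\cdot\, + a)$ then tracks $k$ over $[-R,R]$ by the same ODE estimate, and $(S^{+}_{\ell} - a, S^{+}_{k}) \in N_{s}$ follows as in part~(i).

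The principal obstacle is that the map $k'\mapsto S^{+}_{k'}$ can be discontinuous at geodesics $k'$ for which some $y\in\Gamma x$ lies exactly on $\partial^{+} E_{k'}$: a small perturbation may push $y$ outside $\overline{E}^{+}$ and so remove its projection from $S^{+}$. In any compact region only finitely many $y$ can cause this phenomenon, and each imposes a codimension-$1$ condition on the space of geodesics (and, in part~(ii), a measure-zero condition on the shift parameter $a$). Since the set of $\Sigma$-closed geodesics and the $\ell$-orbit are both dense in $S^{1}(T\Sigma)$, the approximations in parts~(i) and~(ii) can be carried out inside the open dense complement of these bad loci, where $k'\mapsto S^{+}_{k'}$ is locally constant under small perturbations, so the $N_{r}$-closeness on $\RR$ is preserved.
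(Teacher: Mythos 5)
Your overall skeleton (Anosov's theorem for (i), the standing density hypothesis for (ii), exponential closeness of initial conditions, and local finiteness of $\Gamma x$ to reduce to finitely many points in a window) is the same as the paper's. You also correctly identify the real difficulty: the map $k'\mapsto S^{+}_{k'}$ is discontinuous at geodesics having points of $\Gamma x$ at distance exactly $\rho$. But your fix for it does not work. You propose to carry out the approximation in the complement of the ``bad loci,'' where the map is locally constant. The problem is that the \emph{target} of the approximation is fixed and may itself be bad: in (i) the target is $\ell$, which may well satisfy $d(y,\ell)=\rho$ for some $y\in\Gamma x$ (Theorem~\ref{thm:cp2} deliberately drops the hypothesis that excludes this); in (ii) the target is an arbitrary geodesic $k$, and in the application (the proof of Lemma~\ref{lem:chardense}) part (ii) is invoked precisely for geodesics tangent to $\partial\Delta$, i.e.\ maximally degenerate ones. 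If $y$ lies on $\partial^{+}E_{\ell}$, then $p_{\ell}(y)\in S^{+}_{\ell}$ by definition, and a generic nearby closed geodesic $k$ avoids the critical hypersurface but may land on either side of it; if it lands on the wrong side, $y\notin\overline{E}_{k}$ and $S^{+}_{k}$ is missing the corresponding point, so $(S^{+}_{\ell},S^{+}_{k})\notin N_{r}$. Genericity tells you that you do not land \emph{on} the critical set; it gives no control over \emph{which side} you land on, and you need the correct side simultaneously for all the (finitely many, but possibly several) boundary points. Your parenthetical about a measure-zero condition on $a$ in (ii) also cannot help: varying $a$ only translates the fixed set $S^{+}_{\ell}$ and does not change whether $\ell$ or $k$ carries boundary points.

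The paper's resolution is exactly the missing ingredient, and it is why $S^{+}$ is defined via the half-open tube $\overline{E}^{+}_{k}=E_{k}\cup\partial^{+}E_{k}$: one approximates the \emph{reversed} vector $-v$ (resp.\ $-w$), choosing the approximating geodesic so that the target interval $I$ lies in the positive component of $E_{k}\setminus k$ and the approximating interval $J$ lies in the positive component of $E_{\ell}\setminus\ell$. With this configuration every point of $\Gamma x$ on $\partial^{+}E_{\ell}$ falls strictly inside $E_{k}$, every point on the negative boundary falls strictly outside $\overline{E}_{k}$, and symmetrically for $k$; hence membership in $\overline{E}^{+}_{\ell}$ and $\overline{E}^{+}_{k}$ coincides for all points of the finite set $Z$ once the approximation is fine enough (this is the content of Figure~\ref{fig:geod} and Remark~\ref{rem:diff}). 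Without some such side-selection device your argument proves the lemma only under the extra hypothesis $d(y,\ell)\neq\rho$ for all $y\in\Gamma x$ (and only for targets $k$ with no tangencies in (ii)), which is the setting of Theorem~\ref{thm:cp} but not of Theorem~\ref{thm:cp2}, where the lemma is actually needed.
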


\begin{proof}
	Take any $r > 0$ and consider the interval $I=\ell ([-r,r])$. Let $v=\frac{d\ell}{dt}\big|_{t=0}$. By Theorem~\ref{thm:Anosov}, we can take a unit vector $w$ tangent to a $\Sigma$-closed geodesic $k$ and  arbitrarily close to $-v$. Let $Z$ be the subset of all points $z$ in $\Gamma x$ such that $d(I, z) \leq \rho$. For $m=k,\ell$, let $\overline{E}^{+}_{m}$ be the union of the open tubular neighbourhood of $m$ of radius $\rho$ in $\mathbb{H}^{2}$ and its positive boundary, as in Definition \ref{def:Splus}. We may assume that the tangent vector $w$ of  $k$  at $t=0$  is sufficiently close to $-v$, so that $I$ is contained in the positive component of $E_{k} \setminus k$ and $J$ is contained in the positive component of $E_{\ell} \setminus \ell$, where $J=k([-r,r])$. Since $Z$ is finite, by replacing $k$ with a $\Sigma$-closed geodesic closer to $I$, we can assume the following: 
	\begin{itemize}
		\item for any $z \in Z$, we have $z \in \overline{E}^{+}_{\ell}$ if and only if $z \in \overline{E}^{+}_{k}$, 
		\item  $d(\iota (y),y) < 1/2r$ for any $y \in J$, where $\iota : J \to I$ is the unique orientation reversing isometry, and
		\item  $d(p_{k}(z),p_{\ell}(z)) < 1/2r$ for any $z \in Z$, where $p_{k} : \mathbb{H}^{2} \to k$ is the orthogonal projection. 
	\end{itemize}
	By the first condition, we have $S^{+}_{\ell} \cap I \subset p_{\ell}(Z)$ and $S^{+}_{k} \cap J \subset p_{k}(Z)$. For any $z \in Z$, by the second and third conditions, we have 
	\[
	d(p_{\ell}(z),\iota(p_{k}(z))) < d( p_{\ell}(z), p_{k}(z)) + d(p_{k}(z), \iota(p_{k}(z))) < 1/r.
	\]
	Since $\iota (\ell(0)) = k(0)$, it follows that $(S^{+}_{\ell}, S^{+}_{k})\in N_{r}$. This completes the proof of (\ref{enu:Nr}).

	For (\ref{enu:Ns}), take any $s > 0$ and any geodesic $k$ on $\mathbb{H}^{2}$. Let $w=\frac{dk}{dt}\big|_{t=0}$. 
	Since the unit tangent vectors of the projection of $\ell$ is dense in $S^{1}(T\Sigma)$ by assumption, we can take $\gamma \in \Gamma$ and a unit tangent vector $v$ of $\ell$ so that $\gamma_{*}v$ is arbitrarily close to $-w$, where $\gamma_{*}$ is the tangent map of the action $\mathbb{H}^{2} \to \mathbb{H}^{2}$ of $\gamma$. Let $I' = k([-r,r])$.  Let $Z'$ be a subset of $\Gamma x$ which consists of all points $z' \in \Gamma x$ such that $d(z',I') \leq \rho$. The rest of the argument is parallel to the proof of (i). Since $Z'$ is finite, by taking $\gamma \in \Gamma$ and the unit tangent vector $v'$ of $\ell$ at parameter $t=a$ so that $\gamma_{*}v'$ is sufficiently close to $-w$, we have $(S^{+}_{\ell}-a, S^{+}_{k})\in N_{s}$.
\end{proof}

\begin{remark}\label{rem:diff}
	The last lemma is not true for $S_{\ell}$ in general. If there exists no $y \in \Gamma x$ with $d(y,\ell) = \rho$, then (\ref{enu:Nr}) is true for $S_{\ell}$. Similarly (\ref{enu:Ns}) is true for a geodesic $k$ such that there exists no $y \in \Gamma x$ with $d(y,\ell) = \rho$.
\end{remark}

\begin{figure}[htbp]
	\includegraphics[width=0.65\columnwidth]{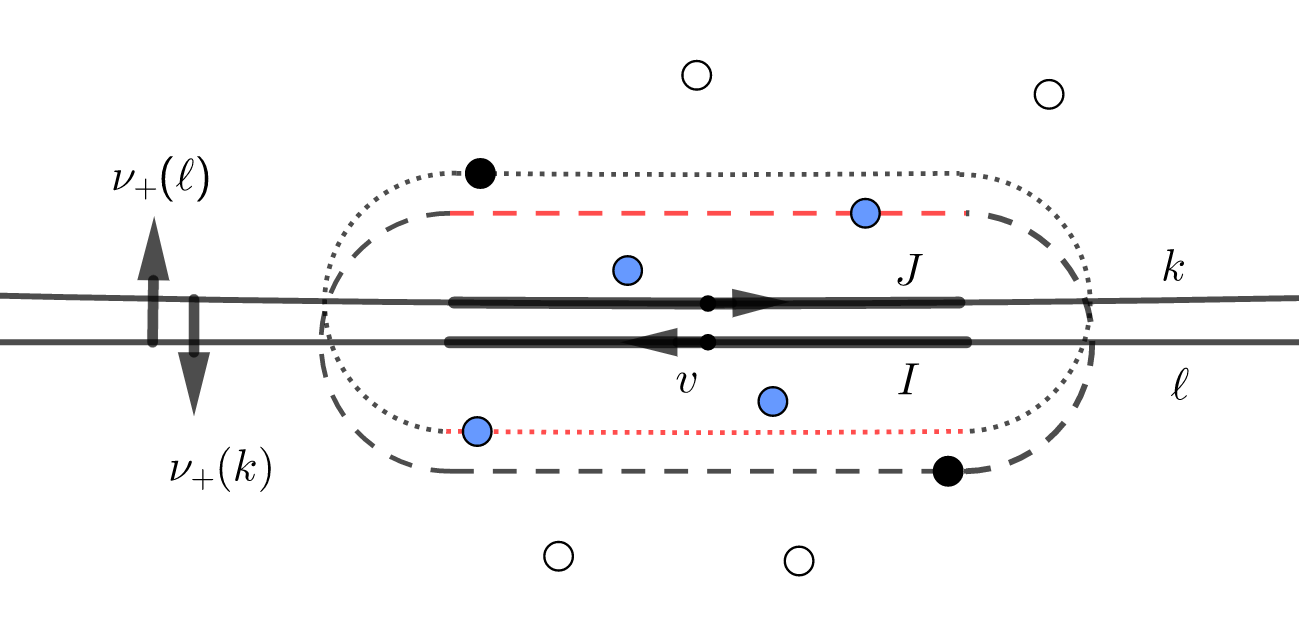}
	%\captionsetup{width=0.8\linewidth}
	\caption{Approximation of $S^{+}_{\ell}$ by $S^{+}_{k}$: The vectors $\nu_{+}(\ell)$ and $\nu_{+}(k)$ represent the orientations of the normal bundles of $\ell$ and $k$, respectively. Two circles with dotted lines represent the boundary of the $\rho$-neighbourhoods of $I$ and $J$, respectively. The dots represent points in $\Gamma x$. The blue dots belong to both $E^{+}_{\ell}$ and $E^{+}_{k}$. But the black dots do not because they belong to the negative side of the boundary of $E_{\ell}$ or $E_{k}$, respectively.}
\end{figure}

Once $S^{+}_{\ell}$ is proved to be Delone, the following consequence of the last lemma shows that $S^{+}_{\ell}$ satisfies the characterisation of an almost chaotic Delone set in Lemma \ref{lem:ac}.

\begin{corollary}\label{cor:ac}
	For every $r\in \mathbb{N}$, there exists a $\Sigma$-closed geodesic $k$ on $\mathbb{H}^{2}$ such that $(S^{+}_{\ell},S^{+}_{k})\in N_r$, and for any $s\in \mathbb{N}$, there exists $a\in \mathbb{R}$ such that $(S^{+}_{\ell}-a, S^{+}_{k})\in N_s$.
\end{corollary} 

Let us characterize now when $S^{+}_{\ell}$ is Delone.

\begin{proposition}\label{prop:charDelone}
	The subset $S^{+}_{\ell}$ is Delone if and only if {\rm Conditions~(\ref{i:rhoinj})} and~{\rm(\ref{i:geodesicdelta})} in {\rm Theorem~\ref{thm:cp2}} are satisfied.
\end{proposition}

Let us prove Proposition \ref{prop:charDelone} by showing the following two lemmas. In the first one, we characterize the discreteness of $S^{+}_{\ell}$ in terms of $\rho$, based on the density of the unit tangent vectors of the projection of $\ell$ in $S^{1}(T\Sigma)$.

\begin{lemma}\label{lem:charsep}
	Let $\mu$ denote the injectivity radius of $\Sigma$ at $x_{0}=\Gamma x$.
	\begin{enumerate}[(i)]
		\item \label{i:sep} If $\rho < \mu$, then $S^{+}_{\ell}$ is $\delta$-separated, where $\delta = 2\mu - 2\rho$.
		\item \label{i:nosep} If $\mu \leq \rho$, then $S^{+}_{\ell}$ is not $\delta$-separated for any $\delta>0$.
	\end{enumerate}
\end{lemma}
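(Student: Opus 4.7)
My plan for part~(i) is a direct triangle inequality argument. Given distinct $z_1,z_2\in S^{+}_{\ell}$, I lift them to distinct $y_1,y_2\in\Gamma x\cap\overline{E}^{+}_{\ell}$ with $p_\ell(y_i)=z_i$, so that $d(y_i,z_i)=d(y_i,\ell)\leq\rho$ and, by the characterisation of the injectivity radius recalled in the statement of Theorem~\ref{thm:cp}, $d(y_1,y_2)\geq 2\mu$. The triangle inequality then gives
\[
d(z_1,z_2)\;\geq\;d(y_1,y_2)-d(y_1,z_1)-d(y_2,z_2)\;\geq\;2\mu-2\rho\;=\;\delta.
\]

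For part~(ii), the strategy is to manufacture close pairs in $S^{+}_{\ell}$ by transplanting close pairs from a carefully chosen auxiliary geodesic, using the density hypothesis on the unit tangent vectors of the projection of $\ell$. Fix $y_1,y_2\in\Gamma x$ with $d(y_1,y_2)=2\mu$, which exist by the definition of $\mu$; write $q$ for the midpoint of the segment $y_1 y_2$ and $b$ for its perpendicular bisector, so that $d(y_i,b)=\mu$ and $p_b(y_1)=p_b(y_2)=q$. Rotate $b$ around $q$ by a small angle $\theta>0$ to obtain a geodesic $m_\theta$. Applying the standard hyperbolic right triangle relations at the vertices $q,\,y_i,\,p_{m_\theta}(y_i)$, with right angle at $p_{m_\theta}(y_i)$ and angle $\pi/2-\theta$ at $q$, yields
\[
\sinh d(y_i,m_\theta)\;=\;\cos\theta\,\sinh\mu,\qquad\tanh d(q,p_{m_\theta}(y_i))\;=\;\sin\theta\,\tanh\mu.
\]
For $\theta>0$ the first identity forces $d(y_i,m_\theta)<\mu\leq\rho$ strictly, so that $y_1,y_2\in E_{m_\theta}\subset\overline{E}^{+}_{m_\theta}$; meanwhile the second identity, together with the obvious symmetry that places $p_{m_\theta}(y_1)$ and $p_{m_\theta}(y_2)$ on opposite sides of $q$ along $m_\theta$, shows that these two projections are distinct with separation $2\tanh^{-1}(\sin\theta\,\tanh\mu)$, a positive quantity that tends to $0$ as $\theta\to 0$.

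Given $\delta>0$, I would first choose $\theta>0$ small enough that $2\tanh^{-1}(\sin\theta\,\tanh\mu)<\delta/2$. The density of the orbit of the geodesic flow of the projected $\ell$ in $S^{1}(T\Sigma)$ then supplies $\gamma\in\Gamma$ and $T\in\RR$ with $\gamma\ell'(T)$ arbitrarily close to $m_\theta'(0)$ in $S^{1}(T\HH^{2})$. For a sufficiently sharp approximation, continuity of nearest-point projection onto a geodesic over compact regions guarantees that $\gamma^{-1}y_1,\gamma^{-1}y_2\in\Gamma x$ both lie strictly within distance $\rho$ of $\ell$, hence in $E_\ell\subset\overline{E}^{+}_{\ell}$, and that their projections $p_\ell(\gamma^{-1}y_1)$ and $p_\ell(\gamma^{-1}y_2)$ are distinct and at distance less than $\delta$ along $\ell$. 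This exhibits two distinct points of $S^{+}_{\ell}$ at distance less than $\delta$, proving that $S^{+}_{\ell}$ is not $\delta$-separated. The main obstacle will be managing the order of parameters in this final step: $\theta$ must be fixed first so that the positive separation $2\tanh^{-1}(\sin\theta\,\tanh\mu)$ of the $m_\theta$-projections provides a margin that survives the perturbation, and only afterwards should one invoke density to choose the approximation level sharp enough to preserve both the strict inequality $d(\gamma^{-1}y_i,\ell)<\rho$ and the distinctness of the two resulting projections.
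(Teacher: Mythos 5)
Your proposal is correct and follows essentially the same route as the paper: part (i) is the identical triangle-inequality argument, and part (ii) likewise manufactures arbitrarily close pairs in $S^{+}_{\ell}$ by approximating, via density of the $\ell$-orbit in $S^{1}(T\Sigma)$, a geodesic near the perpendicular bisector of a closest pair in $\Gamma x$. Your explicit rotation by $\theta$ together with the hyperbolic right-triangle identities is a clean quantitative version that handles the boundary case $\rho=\mu$ uniformly, whereas the paper disposes of $\rho=\mu$ by a separate ad hoc perturbation of the bisector.
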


\begin{proof}
	First note that $2\mu = \min \{ \, d(y,z) \mid y,z \in \Gamma x,\ y \neq z \, \}$. Here (\ref{i:sep}) follows directly from the triangle inequality. Indeed, for every $y_{i}$ in $S^{+}_{\ell}$, choose $\tilde{y}_{i}\in \Gamma x$ so that $d(\tilde{y}_{i},y_{i}) < \rho$ and $p(\tilde{y}_{i}) = y_{i}$. If $y_{i} \neq y_{j}$, then
	\[
	2\mu \leq d(\tilde{y}_{i},\tilde{y}_{j}) \leq d(\tilde{y}_{i},y_{i}) + d(y_{i},y_{j}) + d(y_{j},\tilde{y}_{j}) < 2\rho + d(y_{i},y_{j}),
	\] 
	which implies that $d(y_{i},y_{j}) > 2\mu - 2\rho = \delta$.

	In order to prove (\ref{i:nosep}), let us assume $\mu \leq \rho$. We consider the case $\mu < \rho$ first. Let $y$ and $z$ be a pair of distinct points in $\Gamma x$ such that $d(y,z) = 2\mu$, and let $v$ be a unit tangent vector at the midpoint of the segment $\overline{yz}$ which is perpendicular to $\overline{yz}$. Let $k$ be the geodesic on $\mathbb{H}^{2}$ such that $\frac{dk}{dt}\big|_{t=0}=v$. Assume that we can take $\gamma \in \Gamma$ so that $\gamma_{*}v$ is very close to a tangent vector of $\ell$ at $t=t_{0}$. Since $\ell (t_{0})$ is close to the midpoint of $\overline{yz}$ and we assume $\mu < \rho$, we have $d(\ell (t_{0}),\gamma(y)) < \rho$ and $d(\ell (t_{0}),\gamma(z)) < \rho$. Hence $p_{\ell}(\gamma(y))$ and $p_{\ell}(\gamma(z))$ belong to $S^{+}_{\ell}$. Since $\ell$ is almost tangent to the bisector of the segment $\overline{\gamma(y)\gamma(z)}$ near the middle point of $\overline{yz}$, we can see that $p_{\ell}(\gamma(y))$ and $p_{\ell}(\gamma(z))$ are close to each other. Since we can take $\gamma \in \Gamma$ so that $\gamma_{*}v$ is arbitrarily close to a tangent vector of $\ell$, it follows that $S$ is not $\epsilon$-separated for any $\epsilon>0$. 
	The case where $\rho = \mu$ follows by a slight modification of the proof. Note that, even if we take a geodesic $k_{1}$ on $\mathbb{H}^{2}$ so that a tangent vector of $k_{1}$ is close to $v$, we may have $d(k_{1},z) > \rho$ or $d(k_{1},y) > \rho$ in general. Instead of approximating $v$ with a tangent vector of $\ell$, first we take a tangent vector $v'$ close to $v$ such that $d(k',y) < \rho$ and $d(k',z) < \rho$, where $k'$ is the geodesic tangent to $v'$. We can take $\gamma \in \Gamma$ so that $\gamma_{*}v'$ is close to a tangent vector of $\ell$. Then, we can do the same argument to see that $p_{\ell}(\gamma(y))$ and $p_{\ell}(\gamma(z))$ are close to each other. 
\end{proof}

Let us characterize the density of $S^{+}_{\ell}$ in the following lemma. In the proof, we say that a geodesic $\sigma$ on $\Sigma$ has \emph{two-sided tangency with} $\partial \Delta$ if  
$\sigma$ is tangent to $\partial D$ at every point in $\sigma \cap \partial D$, but it does not have one-sided tangency with $\partial \Delta$; namely, there exists a pair of outward vectors of $\partial \Delta$ at tangential points in $\sigma \cap \partial D$ that are in the opposite directions.

\begin{lemma}\label{lem:chardense}
	The subset $S^{+}_{\ell}$ is $\epsilon$-relatively dense for some $\epsilon > 0$ if and only if {\rm Condition~(\ref{i:geodesicdelta})} in {\rm Theorem~(\ref{thm:cp2})} is satisfied.
\end{lemma}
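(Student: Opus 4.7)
The plan is to reformulate $S^{+}_{\ell}$ via the geodesic flow on $S^{1}(T\Sigma)$ and prove each direction separately. Define $G \subset S^{1}(T\Sigma)$ to be the set of unit tangent vectors $v$ at $p = \pi(v)$ such that $x_{0} = \exp_{p}(s J v)$ for some signed distance $s \in (-\rho, \rho]$, where $J$ is the $90^{\circ}$ rotation to the positive side of the normal bundle. By the definition of $\overline{E}^{+}_{\ell}$, one has $t \in S^{+}_{\ell}$ if and only if $\dot{\bar\ell}(t) \in G$; the interior $\mathring G$ corresponds to the strict signed distance $s \in (-\rho, \rho)$, equivalently, to $\sigma_{v}$ passing through $\mathring\Delta$ at the basepoint of $v$.

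For the forward direction I will prove the contrapositive: if Condition~\ref{i:geodesicdelta} fails, then $S^{+}_{\ell}$ has arbitrarily long gaps. If the first clause fails there is a geodesic $\sigma$ on $\Sigma$ with $d(\sigma, x_{0}) > \rho$, and by compactness this gap is uniform, say $d(\sigma, x_{0}) \geq \rho + 2\eta$. Using the density of the orbit of $\dot{\bar\ell}$ in $S^{1}(T\Sigma)$ and continuity of the geodesic flow over bounded time windows, for every $T > 0$ I find a parameter $t_{0}$ with $\bar\ell|_{[t_{0}-T, t_{0}+T]}$ so $C^{1}$-close to $\sigma|_{[-T, T]}$ that $d(\bar\ell(t), x_{0}) \geq \rho + \eta$ throughout; then no lift of $x_{0}$ lies in the half-open $\rho$-tube over the corresponding lifted window, producing a gap of length $2T$ in $S^{+}_{\ell}$. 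If the second clause fails, pick $\sigma$ with one-sided tangency and orient its normal bundle so that the outward normal of $\partial\Delta$ at every tangent point is positive. I will perturb $\dot\sigma(0)$ infinitesimally in the positive perpendicular direction $J\dot\sigma(0)$, obtaining a nearby geodesic $\sigma'$ that stays strictly outside $\bar\Delta$ on a long window (each lift of $x_{0}$ previously at distance $\rho$ on the negative side of $\sigma$ is pushed to distance $>\rho$), and then reduce to the first case.

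For the reverse direction I will assume Condition~\ref{i:geodesicdelta} and argue by contradiction. If $S^{+}_{\ell}$ is not $\epsilon$-relatively dense for any $\epsilon > 0$, choose gap intervals $[a_{n}, b_{n}] \subset \RR \setminus S^{+}_{\ell}$ with $b_{n} - a_{n} \to \infty$, set $c_{n} = (a_{n} + b_{n})/2$ and $v_{n} = \dot{\bar\ell}(c_{n})$, and extract a subsequence with $v_{n} \to v_{\infty}$; write $\sigma_{\infty}$ for the corresponding bi-infinite geodesic on $\Sigma$. For every $T > 0$ and $n$ sufficiently large, $\phi_{t}(v_{n}) \notin G \supseteq \mathring G$ for $t \in [-T, T]$; since $\mathring G$ is open, limits yield $\phi_{t}(v_{\infty}) \notin \mathring G$ for all $t \in \RR$, equivalently, $\sigma_{\infty}$ never enters $\mathring\Delta$. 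The first clause of Condition~\ref{i:geodesicdelta} then forces $\sigma_{\infty} \cap \partial\Delta$ to be nonempty and tangential.

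The hard part will be to conclude that $\sigma_{\infty}$ has \emph{one-sided} tangency with $\partial\Delta$, which by the second clause of Condition~\ref{i:geodesicdelta} is the desired contradiction. The claim is that, with the normal orientation induced by $v_{\infty}$, the outward normal of $\partial\Delta$ is positive at every tangent point of $\sigma_{\infty}$; equivalently, $x_{0}$ lies on the negative side of $\sigma_{\infty}$ there. If instead some tangent point at parameter $t_{0}$ had $x_{0}$ on the positive side at distance $\rho$, then $\phi_{t_{0}}(v_{\infty})$ would lie on the positive boundary of $G$, and using the transversality of the geodesic flow to this codimension-one boundary at such a tangential configuration, together with the density of the orbit of $\dot{\bar\ell}$ in $S^{1}(T\Sigma)$, I would find times $t' \in [a_{n}, b_{n}]$ at which $\dot{\bar\ell}(t')$ crosses into $\mathring G$, placing $t' \in S^{+}_{\ell}$ and contradicting the gap. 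With one-sided tangency of $\sigma_{\infty}$ established, Condition~\ref{i:geodesicdelta} is violated and the contradiction is complete.
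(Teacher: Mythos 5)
Your forward direction is essentially sound: it reproves by hand the instance of Lemma~\ref{lem:cha}\ref{enu:Ns} that the paper invokes (for a geodesic $k$ avoiding $\Delta$, or with one-sided tangency and suitably oriented, $S^{+}_{k}=\emptyset$, and density of the orbit of $\ell$ transports arbitrarily long empty windows into $S^{+}_{\ell}$). The only inaccuracy there is cosmetic: the image of $\sigma$ is not compact, so the gap $d(\sigma(t),x_{0})-\rho$ need not be bounded below uniformly over all of $\RR$; but you only use it on windows $[-T,T]$, where compactness does apply.

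The reverse direction has a genuine gap at its crucial step, and the set $G$ on which it is phrased is mis-described. The set of $v$ with $x_{0}=\exp_{\pi(v)}(sJv)$, $s\in(-\rho,\rho]$, is a \emph{codimension-one} subset of the three-manifold $S^{1}(T\Sigma)$: the footpoint ranges over a disk and $v$ is then pinned down (up to sign) by the perpendicularity requirement. Hence $\mathring G=\emptyset$, orbits meet $G$ only at isolated times (consistent with $S^{+}_{\ell}$ being discrete), and the step ``since $\mathring G$ is open, limits yield $\phi_{t}(v_{\infty})\notin\mathring G$'' is vacuous; your identification of ``$s\in(-\rho,\rho)$'' with ``the basepoint lies in $\mathring\Delta$'' drops the perpendicularity condition and conflates two different sets. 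That particular step is repairable (if $\sigma_{\infty}$ entered $\mathring\Delta$, the approximating segments would enter $\mathring\Delta$, hence some lift of $x_{0}$ would lie in the open tube $E_{\ell}$ over the gap, a contradiction). What is not repairable as written is the final ``transversality'' step. If $\sigma_{\infty}$ is tangent to $\partial\Delta$ at $t_{0}$ with $x_{0}$ at distance exactly $\rho$ on the positive side, then $\phi_{t_{0}}(v_{\infty})$ sits on the relative boundary (the edge $s=\rho$) of the two-dimensional strip $G$, and an orbit through the \emph{edge} of a codimension-one strip does not force nearby orbits to cross the strip: the approximating geodesics can pass on the far side of the disk of radius $\rho$ about the relevant lift of $x_{0}$, at distance strictly greater than $\rho$, and pick up no point of $S^{+}_{\ell}$ there. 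So you cannot rule out, one tangent point at a time, that $\sigma_{\infty}$ has two-sided tangency --- and two-sided tangency is compatible with Condition~\ref{i:geodesicdelta}, so no contradiction results. The paper closes exactly this gap by a global argument: under Condition~\ref{i:geodesicdelta} the geodesics with two-sided tangency with $\partial\Delta$ form a finite collection of closed geodesics; a geodesic close to, but distinct from, such a closed geodesic is forced into $\mathring\Delta$ within bounded time because tangencies of \emph{both} signs recur along the closed orbit; and the hitting time $\tau$ of $\mathring\Delta$ is therefore bounded on the complement of the corresponding finite union of closed orbits, in particular along the dense (hence non-closed) orbit of $\ell$. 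Some version of this global, both-signs-recurring argument is what your proof is missing.
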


\begin{proof}
	The ``only if'' part follows from Lemma \ref{lem:cha}. Indeed, if Condition~(\ref{i:geodesicdelta}) is not satisfied, then there exists a geodesic on $\Sigma$ which does not intersect $\Delta$, or there exists a geodesic on $\Sigma$ with one-sided tangency with $\partial \Delta$. If a geodesic $k$ on $\mathbb{H}^{2}$ does not intersect $\Delta$, then we have $S^{+}_{k} = \emptyset$. If $k$ has one-sided tangency with $\partial \Delta$, then we have $S^{+}_{k} = \emptyset$ after changing the orientation of $k$ if necessary. Since $(S^{+}_{\ell},\emptyset) \in N_{s}$ means that $\ell$ has an interval $I$ of length $2(s-\frac{1}{s})$ such that $I \cap S^{+}_{\ell} = \emptyset$, in any cases, it follows that $S^{+}_{\ell}$ is not $\epsilon$-relatively dense for any $\epsilon > 0$. 
	
	Let us prove the ``if'' part. First consider the case where any geodesic on $\Sigma$ intersects $\mathring{\Delta}$, where $\mathring{\Delta}$ is the open disk of radius $\rho$ in $\Sigma$ centred at $\Gamma x \in \Sigma$. For $v \in S^{1}(T\Sigma)$, let $\tau (v) \in \mathbb{R}_{\geq 0}$  be defined by
	\[
	\tau (v) = \inf \{ \, |t| \in \mathbb{R}_{\geq 0} \mid \ell_{v} (t) \in \mathring{\Delta} \, \}, 
	\]
	where $\ell_{v}$ is the geodesic on $\Sigma$ such that $\frac{d\ell_{v}}{dt}\big|_{t=0}=v$. Since any geodesic intersects $\mathring{\Delta}$, it follows that $\tau : S^{1}(T\Sigma) \to \mathbb{R}_{\geq 0}$ is well-defined. It is easy to see that it is upper semicontinuous. Then, since $S^{1}(T\Sigma)$ is compact, $\tau$ is bounded from above. This implies that $\tau$ is bounded on $\ell$, which implies that $S^{+}_{\ell}$ is $\epsilon$-relatively dense for some $\epsilon$. 
	
	Let us consider the general case. We will show that, if Condition~(\ref{i:geodesicdelta}) in Theorem~\ref{thm:cp2} is satisfied, there are finitely many closed geodesics on $\Sigma$ that have two-sided tangency with $\partial \Delta$, and any other geodesics on $\Sigma$ intersect $\mathring{\Delta}$. Under Condition~(\ref{i:geodesicdelta}) in Theorem~\ref{thm:cp2}, for any geodesic $\sigma$ on $\Sigma$, either $\sigma$ intersects $\mathring{\Delta}$ or $\sigma$ has two-sided tangency with $\partial \Delta$. Since any geodesic sufficiently close to a geodesic with two-sided tangency intersects $\mathring{\Delta}$, the set of unit tangent vectors of $\partial \Delta$ which are tangent to geodesics with two-sided tangency with $\partial \Delta$ is discrete, and hence finite. It follows that there are only finitely many geodesics on $\Sigma$ with two-sided tangency with $\partial \Delta$, and all of them are closed. Let $C$ be the union of closed orbits in $S^{1}(T\Sigma)$ given by the tangent vectors of all geodesics on $\Sigma$ that have two-sided tangency with $\partial \Delta$. Since a geodesic close to a geodesic with two-sided tangency with $\partial \Delta$ intersects $\mathring{\Delta}$, for a sufficiently small open neighbourhood $U$ of $C$, we see that the function $\tau$ is bounded on $U \setminus C$. It follows that $\tau$ is bounded on $S^{1}(T\Sigma) \setminus C$, and hence so is on $\ell$. Then we can conclude that $S^{+}_{\ell}$ is $\epsilon$-relatively dense for some $\epsilon$ as in the above case.
\end{proof}

Proposition \ref{prop:charDelone} follows from Lemmas \ref{lem:charsep} and \ref{lem:chardense}.

Finally, we will show the aperiodicity of $S^{+}_{\ell}$ by applying Lemma \ref{lem:cha} and a result of Dal'bo for the non-arithmeticity of the length spectrum of Riemann surfaces. Recall, the \emph{length spectrum} of a Riemann surface $M$ is the set of the lengths of all closed geodesics on $M$. Dal'bo \cite{Dalbo} proved that the length spectrum of any Riemann surface cannot be of the form $a\mathbb{N}$ for any $a >0$.

\begin{lemma}\label{lem:ap}
	If {\rm Condition~(\ref{i:geodesicdelta})} of {\rm Theorem~\ref{thm:cp2}} is satisfied, then $S^{+}_{\ell}$ is aperiodic.  
\end{lemma}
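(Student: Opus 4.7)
The plan is to argue by contradiction. Suppose $S^{+}_{\ell} - a = S^{+}_{\ell}$ for some $a > 0$; I aim to show that the length spectrum of $\Sigma$ must then lie in a discrete subgroup of $\RR$, contradicting Dal'bo's non-arithmeticity theorem.

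First I would spread the period $a$ to $S^{+}_{k}$ for every $\Sigma$-closed geodesic $k$ on $\HH^{2}$. By the second part of Lemma~\ref{lem:cha}, for each such $k$ and each $s \in \NN$ there exists $a_{s} \in \RR$ with $(S^{+}_{\ell} - a_{s}, S^{+}_{k}) \in N_{s}$, so $S^{+}_{\ell} - a_{s} \to S^{+}_{k}$ in the local rubber topology. Each $S^{+}_{\ell} - a_{s}$ is invariant under translation by $a$, and continuity of the $\RR$-action lets this pass to the limit, giving $S^{+}_{k} - a = S^{+}_{k}$. Separately, if $L_{k}$ denotes the length of the primitive closed geodesic on $\Sigma$ to which $k$ projects, the primitive hyperbolic element $\gamma \in \Gamma$ stabilising $k$ acts on $k$ by translation by $L_{k}$, preserves $\Gamma x$, and, being an orientation-preserving isometry of $\HH^{2}$, preserves $\overline{E}^{+}_{k}$; hence $\gamma(\overline{E}^{+}_{k} \cap \Gamma x) = \overline{E}^{+}_{k} \cap \Gamma x$, and applying $p_{k}$ yields $S^{+}_{k} - L_{k} = S^{+}_{k}$.

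Next, under Condition~\ref{i:geodesicdelta}, every geodesic on $\Sigma$ meets $\Delta$, so $S^{+}_{k}$ is nonempty; together with $\rho < \mu$, the separation argument in the proof of Lemma~\ref{lem:charsep}\ref{i:sep} shows that $S^{+}_{k}$ is $(2\mu - 2\rho)$-separated uniformly in $k$. The group of periods of $S^{+}_{k}$ is therefore a discrete subgroup $c_{k}\ZZ \subset \RR$ with $c_{k} \geq 2\mu - 2\rho$. Writing $a = m_{k} c_{k}$ and $L_{k} = n_{k} c_{k}$ for positive integers $m_{k}, n_{k}$, the lower bound on $c_{k}$ forces $m_{k} \leq a/(2\mu - 2\rho)$, so $m_{k}$ takes only finitely many values as $k$ varies. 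Setting $M = \operatorname{lcm}\{m_{k}\}$, the identity $L_{k} = (n_{k}M/m_{k})(a/M)$ gives $L_{k} \in (a/M)\ZZ$. Since every primitive closed geodesic on $\Sigma$ is the projection of some $\Sigma$-closed $k$ on $\HH^{2}$, the full length spectrum of $\Sigma$ is contained in the discrete subgroup $(a/M)\ZZ$ of $\RR$, contradicting Dal'bo's theorem.

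The expected main obstacle is the uniform separation step: it relies on $\rho < \mu$ (Condition~\ref{i:rhoinj}), whereas the lemma's hypothesis is only Condition~\ref{i:geodesicdelta}. The cleanest reading is that this lemma is used as an intermediate step in the proof of Theorem~\ref{thm:cp2}, invoked in the Delone case where both conditions hold; alternatively, one can transfer the separation of $S^{+}_{\ell}$ itself---which, being periodic with period $a$ and locally finite, is uniformly separated---to the limits $S^{+}_{k}$ in its hull, bypassing a direct appeal to Condition~\ref{i:rhoinj}.
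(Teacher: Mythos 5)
Your argument is essentially the paper's: the published proof likewise uses Lemma~\ref{lem:cha}\ref{enu:Ns} to pass the period of $S^{+}_{\ell}$ to $S^{+}_{k}$ for every $\Sigma$-closed geodesic $k$, notes that $S^{+}_{k}$ is nonempty and periodic with period $|\sigma|/m$, and contradicts Dal'bo --- though where the paper asserts in one line that $S^{+}_{\ell}$ and $S^{+}_{k}$ ``have the same period,'' your bookkeeping with the period groups $c_{k}\ZZ$, the uniform lower bound on $c_{k}$, and the lcm is a more careful justification of that step, reaching the (equally sufficient) conclusion that the length spectrum lies in $(a/M)\ZZ$. Your fallback for the separation bound --- transferring the uniform separation of the nonempty, locally finite, $a$-periodic set $S^{+}_{\ell}$ to its limits $S^{+}_{k}$ rather than invoking Condition~\ref{i:rhoinj} --- correctly keeps the argument within the lemma's stated hypothesis, consistent with the paper's proof, which also uses only Condition~\ref{i:geodesicdelta}.
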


\begin{proof}
	Assume that $S^{+}_{\ell}$ is periodic with period $\omega$. Take any closed geodesic $\sigma$ on $\Sigma$ and a geodesic $k$ on $\mathbb{H}^{2}$ which is projected to $\sigma$. By assumption, $S^{+}_{k}$ is non-empty. Since $\sigma$ is closed, the set $S^{+}_{k}$ is periodic with period $|\sigma|/m$ for some $m \in \mathbb{N}$, where $|\sigma|$ is the length of $\sigma$. It follows from Lemma \ref{lem:cha}(ii) that $S^{+}_{\ell}$ and $S^{+}_{k}$ have the same period, which means $|\sigma|=\omega m$. Hence, the length spectrum of $\Sigma$ is contained in $\omega \mathbb{N}$. But this contradicts a result of Dal'bo \cite[Proposition 2.1]{Dalbo}.
\end{proof}

Theorem~\ref{thm:cp2} is the combination of Corollary \ref{cor:ac} and Lemma \ref{lem:ap}.

\section{Products of chaotic Delone sets on $\mathbb{R}$}
This section is devoted to the proof of the following result.
\begin{proposition}
	If $S$ is a chaotic Delone subset of $\mathbb{R}$, then $S^n$ is a chaotic Delone subset of $\mathbb{R}^n$ for every $n\geq 1$.
\end{proposition}
\begin{proof}
	Let $S$ be a chaotic $(\epsilon,\delta)$-Delone set for some $\epsilon,\delta>0$, and let $n> 1$. To avoid ambiguity, we denote the elements $\mathbb{R}$ by smallcase letters $x,y,s,\ldots$ and the elements of $\mathbb{R}^{n}$ as vectors $\vec x, \vec y, \vec s,\ldots$. Let 
	\[
	\vec s=(s_1,\ldots,s_n),\ \vec t=(t_1,\ldots,t_n)\in S^n
	\]
	and suppose $\vec s\neq \vec t$, then there is some $1\leq i\leq n$ so that $s_i\neq t_i$. Since $S$ is $\delta$-separated, we have $d_{\mathbb{R}}(s_i,t_i)\geq \delta$, and therefore $d_{\mathbb{R}^n}(\vec s,\vec t)\geq \delta$; this shows that $S^n$ is $\delta$-separated. 
	
	Let us prove that $S^n$ is also $\sqrt{n}\epsilon$-relatively dense: Let $\vec x=(x_1,\ldots,x_n)\in\mathbb{R}^n$. Since $S$ is $\epsilon$-relatively dense, for every $i=1,\ldots,n$, there is some $s_i\in S$ so that $d_{\mathbb{R}}(x_i,s_i)\leq \epsilon$. Let $\vec s = (s_0,\ldots, s_n)$, then 
	\[
	d_{\mathbb{R}^n}(\vec x, \vec s)= \left(\sum_{i=1}^n (x_i-s_i)^2\right)^{1/2}\leq (n\epsilon^2)^{1/2}=\sqrt{n}\epsilon,
	\]
	showing that $S^n$ is a $(\delta,\sqrt{n}\epsilon)$-Delone subset of $\mathbb{R}^n$.
	
	To see that $S^n$ is aperiodic, assume for the sake of contradiction that $S^n-\vec v=S^n$ for some $\vec v=(v_1,\ldots,v_n)\in\mathbb{R}^n$. This means that, for every $\vec s=(s_1,\ldots,s_n)$, $\vec s - \vec v\in S^n$ if and only if $\vec s\in S^n$. In particular, for every $s\in R$, we have $s\in S$ if and only if $s-v_1\in S$, contradicting the hypothesis that $S$ is aperiodic.
	
	Finally, to prove that $S^n$ is almost chaotic, recall that the sets $N_r(S^n)$ ($r>0$) form a neighbourhood basis at $S^n$  (see Section~\ref{sec:prelim}). Also, arguing as before, we get that, for every Delone subset $R$ of $\mathbb{R}$ and $r>0$, 
	\[
	(R+B_{\mathbb{R}}(0,r))^n\subset R^n+B_{\mathbb{R}^n}(\vec 0,\sqrt{n}/r).
	\]
	Now~(\ref{nuu}) and~(\ref{nr}) yield 
	\begin{equation}\label{dimension}
		S\subset N_r(R) \quad \Longrightarrow \quad S^n\subset N_{r/\sqrt{n}}(R^n)
	\end{equation}
	for every $r>0$ and Delone set $R$.
	
	By the assumption that $S$ is almost chaotic and Lemma~\ref{lem:ac}, there is a sequence of periodic Delone sets $T_i$ ($i\geq 1$) in $\mathbb{R}$ and, for each $i$, a sequence $x_{i,j}$ ($j\geq 1$) in $\mathbb{R}$ so that 
	\[
	S\in N_{1/i}(T_i)\quad \text{and} \quad S-x_{i,j}\in N_{1/j}(T_i).
	\] 
	For $i,j\geq 1$, let $\vec x_{i,j}=(x_{i,j},\ldots, x_{i,j})$. Now~(\ref{dimension}) yields  
	\[
	S^n\in N_{\sqrt{n}/i}(T_i^n)\quad\text{and}\quad S-\vec x_{i,j}=(S- x_{i,j})^n \in N_{\sqrt{n}/j}(T_i^n).
	\]
	Arguing as in the beginning of the proof, we get that the sets $T^n$ are Delone, and since they are obviously periodic, the result now follows from Lemma~\ref{lem:ac}.
\end{proof}

\section*{Acknowledgments}
The work was supported supported by the following grants: FEDER/Ministerio de Ciencia, Innovaci\'on y Universidades/AEI/MTM2017-89686-P [to A.L., B.L., and H.N.]; Xunta de Galicia/ED431C 2019/10 [to A.L., B.L., and H.N.]; and JSPS Grant-in-Aid for Scientific Research 17K14195 [to H.N]. It was carried out during the tenure of a Canon Foundation in Europe Research Fellowship by B.L.

\end{document}